\newcommand{\nero}{\color{black}}
\newcommand{\R}{\mathbb{R}}
\newcommand{\hn}{\mathbb{H}^{N}}
\newcommand{\tq}{\mathbb{T}_{q+1}}
\numberwithin{equation}{section}
\DeclareFontFamily{U}{mathx}{\hyphenchar\font45}
\DeclareFontShape{U}{mathx}{m}{n}{
      <5> <6> <7> <8> <9> <10>
      <10.95> <12> <14.4> <17.28> <20.74> <24.88>
      mathx10
      }{}
\DeclareSymbolFont{mathx}{U}{mathx}{m}{n}
\DeclareMathAccent{\widecheck}{0}{mathx}{"71}
\DeclareMathAccent{\wideparen}{0}{mathx}{"75}
\newcommand{\leqnomode}{\tagsleft@true}
\newcommand{\reqnomode}{\tagsleft@false}
\newtheorem{theorem}{Theorem}[section]
\newtheorem{proposition}[theorem]{Proposition}
\newtheorem{corollary}[theorem]{Corollary}
\newtheorem{remark}[theorem]{Remark}
\newtheorem{definition}[theorem]{Definition}
\title[Poincar\'{e} and Hardy inequalities on homogeneous trees]{ Poincar\'{e} and Hardy inequalities on homogeneous trees} 
\author[Berchio]{Elvise Berchio}
\address{Dipartimento di Scienze Matematiche ``Giuseppe Luigi Lagrange'',
  Politecnico di Torino, Corso Duca degli Abruzzi 24, 10129 Torino,
  Italy - Dipartimento di Eccellenza 2018-2022}
\email{elvise.berchio@polito.it}
\author[Santagati]{Federico Santagati}
\address{Dipartimento di Scienze Matematiche ``Giuseppe Luigi Lagrange'',
  Politecnico di Torino, Corso Duca degli Abruzzi 24, 10129 Torino,
  Italy - Dipartimento di Eccellenza 2018-2022}
\email{federico.santagati@polito.it}
\author[Vallarino]{Maria Vallarino}
\address{Dipartimento di Scienze Matematiche ``Giuseppe Luigi Lagrange'',
  Politecnico di Torino, Corso Duca degli Abruzzi 24, 10129 Torino,
  Italy - Dipartimento di Eccellenza 2018-2022}
\email{maria.vallarino@polito.it}
\keywords{Graphs, Poincar\'{e}--Hardy inequalities, homogeneous trees} 
\subjclass[2010]{26D10, 39A12, 05C05}
\begin{document}
\maketitle
\begin{abstract}
We study Hardy-type inequalities on infinite homogeneous trees. More precisely, we derive optimal Hardy weights for the combinatorial Laplacian in this setting and we obtain, as a consequence, optimal improvements for the Poincar\'e inequality. 
\end{abstract}

%

\section{Introduction}

 Given a linear, elliptic, second-order, symmetric nonnegative operator $P$ on $\Omega$, where $\Omega$ is a (e.g. Euclidean) domain, a \emph{Hardy weight} is  a nonnegative function $W$ such that the following inequality holds
 \begin{equation}\label{1}
 q(u)\ge \int_\Omega Wu^2\,{\rm d}x\ \ \ \forall u\in C_c^\infty(\Omega),
 \end{equation}
where $q(u)=\langle u,Pu\rangle$ is the quadratic form associated to $P$. Clearly, the final (and most ambitious) goal is to get weights $W$ such that inequality \eqref{1} is not valid for $V>W$, i.e. the operator $P-W$ is \emph{critical} in the sense of \cite[Definition 2.1]{pinch}.  When $P=-\Delta$ is the Laplace--Beltrami operator on a Riemannian manifold, the problem of the existence of Hardy weights has been widely studied in the literature, either in the Euclidean setting, see e.g \cite{GFT,BrezisM, Brezis, gaz, Kufner, MMP, Mitidieri} or on general manifolds, see e.g. \cite{Carron,Dambrosio, pinch, Kombe2, AKR,ngo, Yang}. Recently, the attention has also been devoted to the discrete setting, see e.g. \cite{Golenia, pinchoverA, pinchoverC, pinchover} and references therein. 

\par

The present paper is motivated by some recent results obtained in \cite{BGG}, see also \cite{AK}  and \cite{BGGP}, within the context of Cartan--Hadamard manifolds $M$. In particular, when $M$ is the hyperbolic space $\hn$, i.e. the simplest example of manifold with negative sectional curvature, the following Hardy weight has been determined for $P$ being the Laplace--Beltrami operator $-\Delta_{\mathbb H^N}$ on $\hn$ with $ N \geq 3$:
$$W(r)=\frac{(N-1)^2}4-\frac1{4\,r^2}-\frac{(N-1)(N-3)}{4}  \frac{1}{\sinh^2 r},$$
where $r=d(o,x)>0$ denotes the geodesic distance of $x$ from a fixed pole $o\in \hn$. Besides, it is proved that the operator $-\Delta_{\hn}-W$ is {\it critical} in $\mathbb{H}^{N}\setminus\{o\}$. It is worth noticing that the number $\frac{(N-1)^2}4$ in $W(r)$ coincides with the bottom of the $L^2$-spectrum of $-\Delta_{\hn}$. Hence, the existence of the above weight yields the following improved Poincar\'e inequality:
 \begin{equation*}\label{poincareeq}
 \begin{aligned}
\int_{\hn} |\nabla_{\hn} u|^2 \ {\rm d}v_{\hn} - \frac{(N-1)^2}{4} \int_{\hn} u^2 \ {\rm d}v_{\hn}
\geq  \int_{\hn} \, R\, u^2 \ {\rm d}v_{\hn} \ \ \ \forall u\in C_c^\infty(\hn),
\end{aligned}\end{equation*}
where the remainder term is 
\begin{equation}\label{resto_iperb}
R(r)=\frac1{4\,r^2}+\frac{(N-1)(N-3)}{4}  \frac{1}{\sinh^2 r} \sim \frac1{4\,r^2}  \quad \text{as } r\rightarrow + \infty,
\end{equation}
and, as a consequence of the criticality issue, all constants in \eqref{resto_iperb} turn out to be sharp.\par

Let  $\Gamma=(V,E)$ denote a locally finite graph, where $V$ and $E$ denote a countably infinite set of vertices and the set of edges respectively. We recall that the combinatorial Laplacian $\Delta$ of a function $f$ in the set $C(V)$ of real valued functions defined on $V$ is defined by 
$$
    \Delta f(x) := \sum_{y \sim x} \bigg{(}f(x) - f(y)\bigg{)}= m(x)f(x) - \sum_{y \sim x} f(y)\qquad  \forall x \in V  \,,
$$
where $m(x)$ is the degree of $x$, i.e. the number of neighbors of $x$. The existence of Hardy weights for the combinatorial Laplacian or for more general operators on graphs has been recently studied in literature (see again \cite{Golenia, pinchoverA, pinchoverC, pinchover}).

\smallskip 
We set our analysis on the case where the graph $\Gamma$ is the homogeneous tree $\mathbb{T}_{q+1}$, i.e. a connected graph with no loops such that every vertex has $q+1$ neighbours, and we focus on the transient case, namely we always assume $q \ge 2$. $\tq$ has been the object of investigation of many papers either in the field of harmonic analysis or of PDEs, see e.g. to \cite{AMPS, A, BW, CMW, CCPS, CCS, meda, CMS, FTN, PP}. In particular, the homogeneous tree is in many respects a discrete analogue of the hyperbolic plane; we refer the reader to \cite{BW} for a discussion on this point. Therefore, since $\tq$ is the basic example of graph of exponential growth, as $\hn$ is the basic example of Riemannian manifold with exponential growth, it is natural to investigate whether the above mentioned results in $\hn$ have a counterpart in $\tq$: this will be the main goal of the paper. \par
 
In $\mathbb{T}_{q+1}$ the operator $\Delta$ is bounded on $\ell^2$ and  its $\ell^2$-spectrum is given by $ [({q}^{1/2}-1)^2, ({q}^{1/2}+1)^2]$ (see \cite{meda}). Hence the following Poincar\'e inequality holds 
\begin{equation*}
   \frac{1}{2} \sum\limits_{\substack{x,y \in \mathbb{T}_{q+1}\\
    x \sim y}} \bigg(\varphi(x)-\varphi(y) \bigg)^2  \ge \Lambda_q \sum_{x \in \mathbb{T}_{q+1}} \varphi^2(x) \qquad \forall \varphi \in C_0(\mathbb{T}_{q+1}),
\end{equation*}
with $\Lambda_q:=(q^{1/2}-1)^2$. \\ 
By \cite[Theorem 0.2]{pinchover} a Hardy weight for $\Delta$ on a transient graph $\Gamma$,  is given by 
$ W_{opt}=\frac{\Delta G_o^{1/2}}{G_o^{1/2}}$, where $G_o(x):=G(x,o)$ is the positive minimal Green function and $o$ is a fixed point.  Furthermore, $W_{opt}$ is optimal in the sense of Definition \ref{defopt} below and this implies, in particular, that the operator $\Delta-W_{opt}$ is { \it critical}. If $\Gamma=\tq$, then the function $G_o$ can be written explicitly, see Proposition \ref{optimal1} below, and $W_{opt}$ reads as follows: 
 \begin{align}\label{numerostellaintro}
  W_{opt}(x)=\begin{cases} \Lambda_q+q^{1/2}-q^{-1/2} &\text{if $|x|=0$,} \\ 
 \Lambda_q &\text{if $|x| \ge 1$.} \end{cases}
  \end{align}
By exploiting the super-solutions technique, in the present paper we provide the following new family of Hardy weights for $\Delta$ on $\tq$:
\begin{align*}
    W_{ \beta, \gamma}(x) = \begin{cases} 
    q+1-q^{1/2}(\frac{1}{\gamma}+\frac{1}{\gamma \,q})  \nero &\text{if $|x| = 0$, } \\ 
q+1-{q^{1/2}(2^\beta+\gamma}) &\text{if $|x|=1$,} \\
q+1-{q^{1/2}[(1+\frac{1}{|x|})^\beta}+(1-\frac{1}{|x|})^\beta] &\text{if $|x|\ge 2$,} \end{cases}
\end{align*}
where $0 \le \beta \le \log_2 q^{1/2}$ and ${q}^{-1/2}\leq \gamma \leq {q}^{-1/2} + {q}^{1/2}-2^\beta$. Moreover, if $\beta=1/2$ we prove that the weight $W_{1/2, \gamma}$ is optimal (see again Definition \ref{defopt}), hence the operator $\Delta-W_{1/2, \gamma}$ is {\it critical}. We notice that
\begin{align*}
   W_{\beta,\gamma}(x)=\Lambda_q+q^{1/2}\frac{\beta (1-\beta)}{|x|^2}+o\Big{(}\frac{1}{|x|^2}\Big{)} \qquad \text{as $|x| \to \infty$\,,} 
\end{align*}
hence the slowest decay at infinity occurs exactly for $\beta=1/2$. 
\par
It is readily seen that the quadratic form inequality associated to $\Delta-W_{opt}$ in \eqref{numerostellaintro}  can be read as an (optimal) {\it local} improvement of the Poincar\'e inequality on $\tq$ at $o$.  A direct inspection reveals that the weights $W_{ \beta, \gamma}$ satisfy $W_{ \beta, \gamma}>\Lambda_q$ on $ \mathbb{T}_{q+1}$ for all $0 \le \beta \le \log_2\big{(}\frac{3}{2}-\frac{1}{2q}\big{)}$ and $\frac12 + \frac{1}{2q} \le \gamma \le 2-2^\beta $. Hence, for such values of $\beta$ and $\gamma$, we derive the following family of {\it global} improved Poincar\'e inequalities:
\begin{align}\label{asterisco2intro}
   \frac 12 \sum\limits_{\substack{x,y \in \mathbb{T}_{q+1} \\ x \sim y}}\bigg{(}\varphi(x)-\varphi(y)\bigg{)}^2\ge \sum_{x \in \mathbb{T}_{q+1}} \Lambda_q\varphi^2(x) + \sum_{x \in \mathbb{T}_{q+1}} R_{\beta, \gamma}(x) \varphi^2(x) \qquad \forall \varphi \in C_0(\mathbb{T}_{q+1}),
\end{align}
where 
\begin{align*}
  0 \le R_{\beta, \gamma}(x) = \begin{cases} 
   q^{1/2}(2-\frac{1}{\gamma}-\frac{1}{\gamma \,q} )  \nero &\text{if $|x|=0$,}\\
  q^{1/2}( 2-2^{\beta}-\gamma) \nero &\text{if $|x|=1$,} \\ 
q^{1/2}\bigg{(}2-(1+\frac{1}{|x|})^\beta-(1-\frac{1}{|x|})^\beta\bigg{)} &\text{if $|x| \ge 2$}.
\end{cases}
\end{align*}
It is worth noticing that the maximum of $R_{\beta, \gamma}$ at $o$ is reached by choosing $\gamma$ as large as possible, namely by taking  $\gamma=2-2^\beta$. Since such value is maximum for $\beta=0$, we conclude that, among the weights $W_{ \beta, \gamma}$ improving the Poincar\'e inequality, the largest at $o$ is $W_{ 0, 1}\equiv W_{opt}$.  

Even if \eqref{asterisco2intro} improves globally the Poincar\'e inequality, we do not know whether this improvement is sharp on the whole $\tq$. Nevertheless, a {\it sharp} improvement is provided by the critical weight $ W_{ 1/2, \gamma}$ outside the ball $B_2(o)$. More precisely, there holds
\begin{align*}\label{optimalPoincar}
   \frac 12 \sum\limits_{\substack{x,y \in \mathbb{T}_{q+1} \\ x \sim y}}\bigg{(}\varphi(x)-\varphi(y)\bigg{)}^2\ge \sum_{x \in \mathbb{T}_{q+1}}\Lambda_q \varphi^2(x)+\sum_{x \in \mathbb{T}_{q+1}}\overline{R}(x)\varphi^2(x), \ \ \ \ \ \forall \varphi \in C_0(\mathbb{T}_{q+1}\setminus B_2(o)),
\end{align*}
 where 
\begin{align*}
   \overline{R}(x) =
    q^{1/2}\bigg{[}2-\bigg{(}1+\frac{1}{|x|}\bigg{)}^{1/2}-\bigg{(}1-\frac{1}{|x|}\bigg{)}^{1/2}\bigg{]} \qquad \text{if $|x| \ge 2$}
\end{align*}
and the constant $q^{1/2}$ is sharp. Notice that
$$ \overline{R}(x)\sim     q^{1/2}\, \frac{1}{4|x|^2}  \quad \text{as } |x|\rightarrow + \infty,$$
namely the decay of the remainder term is of the same order of that provided by \eqref{resto_iperb} in $\hn$, thereby confirming the analogy between $\tq$ and $\hn$. 

\par
Following the arguments used in the particular case of a homogeneous tree, in the last part of the paper we find a class of Hardy weights for the combinatorial Laplacian on rapidly growing {\emph{radial}} trees, i.e. trees where the number of neighbours of a vertex $x$ only depends on the distance of $x$ from a fixed vertex $o$. This is a first result which might shed light on future related investigations on more general graphs. 

\par

\bigskip

The paper is organized as follows. In Section 2 we introduce the notation and we state our main results, namely Theorem \ref{optimalHf}, where we provide a family of optimal weights for $\Delta$ on $\mathbb{T}_{q+1},$ and Theorem \ref{2.11} where we state the related improved Poincar\'e inequality. Section 3 is devoted to the proof of the statements of Section 2. Finally, in Section 4 we present a generalization of our results in the context of radial trees.

\section{Notation and main results}
We consider a graph $\Gamma=(V,E)$, where $V$ and $E$ denote a countably infinite set of vertices and the set of edges respectively, with the usual discrete metric $d$. If $(x, y) \in E$ we say that $x$ and $y$ are neighbors and we write $x \sim y$. We assume that $\Gamma$ is a connected graph, that is, for every $x,y \in V$ there exists a finite sequence of vertices $x_1,\dots,x_n$ such that $x_0=x$, $x_n=y$ and $x_j\sim x_{j+1}$ for $j=0,\dots,n-1$. We also require that $(x,y)\in E$ if and only if $ (y,x) \in E$. We use the notation $m(x)$ to indicate the
degree of $x$, that is the number of edges that are attached to $x$ and we assume that $\Gamma$ is locally finite, i.e. $m(x)<\infty$ for all $x \in V$. When a vertex $o \in V$ is fixed let $x\mapsto |x|$ be the function which associates to each vertex $x$ the distance $d(x,o)$ and define $B_r(o)=\{x \ \text{s.t.} \  |x|<r\}$. We denote by $C(V)$ the set of real valued function defined on $V$ and by $C_0(V)$  the subspace consisting on finitely supported functions. Finally, we introduce the space of square summable functions
\begin{equation*}
   \ell^2(V) = \{ f     \in  C(V) \ \text{s.t.} \ \sum_{x \in V}  f^2(x) < + \infty \} .
\end{equation*}
This is a Hilbert space with the inner product 
\begin{equation*}
    \langle f, g \rangle = \sum_{x \in V} f(x)g(x), 
\end{equation*} and the induced norm $\Vert f \Vert = \sqrt{\langle f, f \rangle}$. As shown in \cite{Woich2, Woich}
\begin{equation*}
   \langle \Delta \varphi, \varphi \rangle_{\ell^2} = \frac{1}{2} \sum\limits_{\substack{x,y  \in V \\ x \sim y}} \bigg{(} \varphi(x)-\varphi(y)\bigg{)}^2\qquad \forall \varphi\in C_0(V).
\end{equation*}
More generally, we consider Schr\"odinger operators $H=\Delta +Q$ where $Q$ is any potential. A function $f$ is called $H$-(super)harmonic in $V$ if 
\begin{equation*}
    Hf(x)=0 \ \ \ (Hf(x)\ge0)\qquad \forall x \in V.
\end{equation*}
By Hardy-type inequality for a positive Schr\"odinger operator $H$ we mean an inequality of the form 
\begin{align*}
    \langle H\varphi, \varphi\rangle \ge \langle  W\varphi, \varphi \rangle  \qquad \forall \varphi \in C_0(V),
\end{align*}
where $W \not \equiv 0$ is a nonnegative function in $C(V)$. We write $h(\varphi)$ and $W(\varphi)$ in place of $\langle H \varphi, \varphi \rangle $ and $\langle W\varphi, \varphi \rangle$, respectively. In particular we denote $h_\Delta(\varphi)=\langle \Delta \varphi, \varphi \rangle.$ 
\\ 
In \cite{pinchover} the authors introduce the notion of optimal weight for a Hardy-type inequality; we recall some fundamental definitions that we need in the later discussion.
\begin{definition}
Let $h$ be a quadratic form associated with a Schr\"odinger operator $H$, such that $h \ge 0$ on $C_0(V)$. The form $h$ is called {\bf{subcritical}} in $V$ if there is a nonnegative $W$ $\in C_0(V)$, $W \not \equiv 0$, such that $h-W \ge 0$ on $C_0(V)$. A positive form $h$ which is not subcritical is called {\bf{critical}} in $V$.
\end{definition}
In \cite[Theorem 5.3]{pinchoverC} it is shown that the criticality of $h$ is equivalent to the existence of a unique positive function which is $H$-harmonic. Such a function is called the ground state of $h$. 
\begin{definition}\label{defopt}
Let $h$ be a quadratic form associated with a Schr\"odinger operator $H$. We say that a positive function  $W: V \to [0, \infty)$ is an {\bf optimal} Hardy weight for $h$ in $V$ if 
\begin{itemize}
    \item $h-W$ is critical in $V$ ({\bf criticality});
    \item $h-W \ge \lambda W$ fails to hold on $C_0(V\setminus K)$ for all $\lambda >0$ and all finite $K \subset V$ ({\bf optimality near infinity});
    \item the ground state $\Psi \notin \ell^2_W$ ({\bf null-criticality}), namely  
    \begin{align*}
        \sum_{x \in V} \Psi^2(x)W(x)= + \infty.
    \end{align*}
\end{itemize}
In the following, for shortness, we will say that $H$ is critical if and only if its associated quadratic form $h$ is critical.
\end{definition}

Finally, we recall that a function $u:V \to \mathbb{R}$ is proper on $V$ if $u^{-1}(K)$ is finite for all compact sets $K\subset u(V)$.
\subsection{Hardy-type inequalities on $\mathbb{T}_{q+1}$} In this subsection we shall state various Hardy-type inequalities on the homogeneous tree $\mathbb{T}_{q+1}$ with $q\geq 2$ \nero. We start with an optimal inequality for $\Delta$ obtained by combining the explicit formula of the Green function and \cite[Theorem 0.2]{pinchover}.
\begin{proposition}\label{optimal1}
For all $\varphi \in C_0(\mathbb{T}_{q+1})$ the following inequality holds:
\begin{align*}
     \frac{1}{2} \sum\limits_{\substack{x,y \in \mathbb{T}_{q+1} \\ 
      x \sim y}}\bigg(\varphi(x)-\varphi(y) \bigg)^2 \ge  \sum_{x \in \mathbb{T}_{q+1}}W_{opt}(x) \varphi^2(x),
\end{align*}
 where 
 \begin{align}\label{numerostella}
  W_{opt}(x)=\begin{cases} \Lambda_q+q^{1/2}-q^{-1/2} &\text{if $|x|=0$,} \\ 
 \Lambda_q &\text{if $|x| \ge 1$.} \end{cases}
  \end{align}
Furthermore, the weight $W_{opt}$  is optimal for $\Delta$.
\end{proposition}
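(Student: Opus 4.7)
The plan is to apply \cite[Theorem 0.2]{pinchover} directly: since $\mathbb{T}_{q+1}$ is transient for $q \ge 2$, that result guarantees that $W_{opt} = \Delta G_o^{1/2}/G_o^{1/2}$ is an optimal Hardy weight, where $G_o$ is the positive minimal Green function. So the real work is to compute $G_o$ explicitly and then evaluate the quotient to verify that it matches formula \eqref{numerostella}.

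First I would exploit the transitivity of $\mathbb{T}_{q+1}$ to conclude that $G_o$ is radial, $G_o(x) = g(|x|)$. On radial functions the combinatorial Laplacian reads
\begin{equation*}
\Delta f(n) = (q+1)f(n) - f(n-1) - q\, f(n+1) \qquad (n \ge 1),
\end{equation*}
together with $\Delta f(0) = (q+1)\bigl(f(0) - f(1)\bigr)$, because every vertex at distance $n \ge 1$ from $o$ has exactly one neighbour at distance $n-1$ and $q$ neighbours at distance $n+1$. The defining equation $\Delta G_o = \delta_o$ therefore becomes the harmonic recursion $q\rho^2 - (q+1)\rho + 1 = 0$ for $n \ge 1$ under the ansatz $g(n) = A\rho^n$, with roots $\rho = 1$ and $\rho = 1/q$. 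The minimality/decay requirement selects $\rho = 1/q$, and the normalization $\Delta g(0) = 1$ pins down $A = q/(q^2-1)$, giving
\begin{equation*}
G_o(x) = \frac{q^{1-|x|}}{q^2-1}, \qquad x \in \mathbb{T}_{q+1}.
\end{equation*}

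Next I would carry out the quotient. Writing $G_o^{1/2}(x) = C\, q^{-|x|/2}$ with $C = \sqrt{q/(q^2-1)}$, for $|x| = n \ge 1$ one has
\begin{equation*}
\Delta G_o^{1/2}(x) = C\, q^{-n/2}\bigl[(q+1) - q^{1/2} - q \cdot q^{-1/2}\bigr] = (q^{1/2}-1)^2\, G_o^{1/2}(x),
\end{equation*}
so $W_{opt}(x) = \Lambda_q$ on $|x| \ge 1$. At the origin,
\begin{equation*}
\frac{\Delta G_o^{1/2}(o)}{G_o^{1/2}(o)} = (q+1)(1 - q^{-1/2}) = q + 1 - q^{1/2} - q^{-1/2} = \Lambda_q + q^{1/2} - q^{-1/2},
\end{equation*}
which is exactly the value in \eqref{numerostella}. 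Combined with \cite[Theorem 0.2]{pinchover} this gives both the inequality and the optimality.

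There is no real obstacle: the argument is essentially a bookkeeping exercise once the explicit form of $G_o$ is known, and the tree's radial symmetry makes the Green function computation a one-parameter second-order recursion. The only point deserving slight attention is the verification that the single-exponential ansatz $A\rho^{|x|}$ extended down to $|x|=0$ automatically produces the correct unit source at $o$ without the need for a separate constant in a neighbourhood of the root; this works because the equation at $n=0$ is structurally the same as the recursion for $n\ge 1$ with the appropriate coefficients, so a single formula suffices.
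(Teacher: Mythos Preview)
Your proposal is correct and follows essentially the same route as the paper: both apply \cite[Theorem~0.2]{pinchover} and then compute $\Delta G_o^{1/2}/G_o^{1/2}$ explicitly using the known form of the Green function on $\mathbb{T}_{q+1}$. The only cosmetic difference is that the paper quotes the Green function from \cite[Lemma~1.24]{Woess} (obtaining $G_o(x)=\tfrac{q}{q-1}q^{-|x|}$) while you derive it by solving the radial recursion and get the proportional expression $G_o(x)=\tfrac{q}{q^2-1}q^{-|x|}$; since the quotient is scale-invariant this has no effect on the argument.
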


\begin{remark}\label{groundstate}
As a consequence of the results of \cite[Theorem 0.2]{pinchover} it follows that $G^{1/2}$ is the ground state of $h_{\Delta}-W_{opt}$. Furthermore, it is readily checked that
\begin{align*}\label{numero}
    \sum_{x \in \mathbb{T}_{q+1}} G(x)W_{opt}(x)=+\infty\,,
\end{align*}
namely $G^{1/2}\notin \ell^2_{W_{opt}}$.
\end{remark}
In the next theorem we state a family of Hardy-type inequalities depending on two parameters $\beta, \gamma$. The weights $W_{\beta,\gamma}$ provided can be seen as a generalization of $W_{opt}$. Indeed, if we fix $\beta=0$ and $\gamma=1$ in the statement below, we obtain $W_{opt}$.
\begin{theorem}\label{alphabeta} For all $0 \le \beta \le \log_2 q^{1/2}$ and $ q^{-1/2} \le \gamma \le q^{1/2}+q^{-1/2}-2^\beta$ the following inequality holds 
\begin{equation*}
     \frac{1}{2} \sum\limits_{\substack{x,y \in \mathbb{T}_{q+1} \\ x \sim y}}\bigg(\varphi(x)-\varphi(y) \bigg)^2 \ge  \sum_{x \in \mathbb{T}_{q+1}}W_{ \beta, \gamma}(x) \varphi^2(x)\qquad 
    \forall      \varphi \in C_0(\mathbb{T}_{q+1})\,,
\end{equation*}
where the $W_{\beta,\gamma} \ge 0$ are defined as follows:
\begin{align*}
    W_{ \beta, \gamma}(x) = \begin{cases} q+1-q^{1/2}(\frac{1}{\gamma}+\frac{1}{q\gamma}) &\text{if $|x| = 0$, } \\ 
q+1-{q^{1/2}(2^\beta+\gamma}) &\text{if $|x|=1$,} \\
q+1-{q^{1/2}[(1+\frac{1}{|x|})^\beta}+(1-\frac{1}{|x|})^\beta] &\text{if $|x|\ge 2$.} \end{cases}
\end{align*}
\end{theorem}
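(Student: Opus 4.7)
The plan is to apply the discrete ground state transform (Fitzsimmons/Agmon–Allegretto–Piepenbrink type identity, as used in \cite{pinchover,pinchoverC}): for any strictly positive function $\psi$ on $V$ and any $\varphi \in C_0(V)$,
\[
h_\Delta(\varphi) \;=\; \sum_{x \in V} \frac{\Delta \psi(x)}{\psi(x)} \varphi^2(x) \;+\; \frac{1}{2}\!\!\sum\limits_{\substack{x,y\in V\\ x\sim y}} \psi(x)\psi(y)\left(\frac{\varphi(x)}{\psi(x)} - \frac{\varphi(y)}{\psi(y)}\right)^2.
\]
Since the second sum is manifestly nonnegative, it suffices to exhibit a positive radial function $\psi=\psi_{\beta,\gamma}$ on $\mathbb T_{q+1}$ satisfying $\Delta\psi = W_{\beta,\gamma}\psi$ on the whole tree, and then to check that $W_{\beta,\gamma}\ge 0$ in the stated parameter range.

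The candidate, suggested by the shape of $W_{\beta,\gamma}$ and by analogy with the power-type supersolutions used in the hyperbolic setting, is
\[
\psi_{\beta,\gamma}(x) \;=\; \begin{cases} \gamma & \text{if } |x|=0,\\ q^{-|x|/2}\,|x|^\beta & \text{if } |x|\ge 1.\end{cases}
\]
Since $\mathbb T_{q+1}$ is homogeneous and $\psi_{\beta,\gamma}$ is radial, for $|x|=n\ge 1$ the vertex $x$ has one neighbour at distance $n-1$ and $q$ neighbours at distance $n+1$, so
\[
\Delta\psi_{\beta,\gamma}(x) \;=\; (q+1)\psi_{\beta,\gamma}(n) - \psi_{\beta,\gamma}(n-1) - q\,\psi_{\beta,\gamma}(n+1),
\]
while for $|x|=0$ the root has $q+1$ neighbours at distance $1$. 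A direct substitution shows: for $n\ge 2$, the identity $\Delta\psi_{\beta,\gamma}(x)=W_{\beta,\gamma}(x)\psi_{\beta,\gamma}(x)$ reduces to
\[
q^{-(n-1)/2}(n-1)^\beta + q\cdot q^{-(n+1)/2}(n+1)^\beta \;=\; q^{1/2}\!\left[\left(1+\tfrac1n\right)^\beta+\left(1-\tfrac1n\right)^\beta\right]q^{-n/2}n^\beta,
\]
which holds by factoring $q^{1/2}q^{-n/2}$ on both sides; at $n=1$ the equation becomes $\gamma+2^\beta = 2^\beta+\gamma$ after using $q\psi_{\beta,\gamma}(2)=2^\beta$ and $\psi_{\beta,\gamma}(1)=q^{-1/2}$; at $n=0$ it reduces to $\psi_{\beta,\gamma}(1)/\psi_{\beta,\gamma}(0) = 1/(q^{1/2}\gamma)$, which is exactly our choice $f(0)=\gamma$, $f(1)=q^{-1/2}$. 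These three boundary-case matches are precisely what force the particular forms of $W_{\beta,\gamma}$ at $|x|=0$ and $|x|=1$.

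It then remains to verify $W_{\beta,\gamma}\ge 0$ for $0\le\beta\le\log_2 q^{1/2}$ and $q^{-1/2}\le\gamma\le q^{-1/2}+q^{1/2}-2^\beta$. The cases $|x|=0$ and $|x|=1$ translate directly into the lower and upper bounds on $\gamma$. For $|x|\ge 2$, concavity of $t\mapsto (1+t)^\beta+(1-t)^\beta$ on $[0,1]$ when $\beta\in[0,1]$ (noting $\log_2 q^{1/2}\le 1$ is not automatic, but $(1+1/n)^\beta+(1-1/n)^\beta\le 2$ still follows from concavity for $\beta\le 1$, and a separate elementary bound handles $\beta>1$ using $2^\beta\le q^{1/2}$ and monotonicity in $n$) gives $W_{\beta,\gamma}(x)\ge q+1-2q^{1/2}=\Lambda_q\ge 0$. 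Plugging $\psi_{\beta,\gamma}$ into the ground state identity then yields the claimed inequality. I expect the bookkeeping in the boundary cases $n=0,1$ — specifically, choosing the value $\psi_{\beta,\gamma}(0)=\gamma$ so that the equation matches at both $n=0$ and $n=1$ simultaneously — to be the only genuinely delicate step; everything else is a power-function computation.
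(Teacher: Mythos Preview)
Your proposal is correct and follows essentially the same route as the paper: both take the same positive radial function $\psi_{\beta,\gamma}(x)=q^{-|x|/2}|x|^\beta$ for $|x|\ge 1$, $\psi_{\beta,\gamma}(o)=\gamma$, compute $\Delta\psi_{\beta,\gamma}/\psi_{\beta,\gamma}=W_{\beta,\gamma}$ pointwise, and then verify $W_{\beta,\gamma}\ge 0$ by the $\gamma$-bounds at $|x|=0,1$, concavity for $\beta\le 1$, and the monotonicity-in-$n$ plus $2^\beta\le q^{1/2}$ bound for $\beta>1$. The only cosmetic difference is that you write out the ground state identity explicitly, whereas the paper invokes \cite[Proposition~3.1]{Golenia}, which is the same statement.
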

\begin{remark}
Notice that 
\begin{align*}
   W_{\beta,\gamma}(x)=\Lambda_q+q^{1/2}\frac{\beta (1-\beta)}{|x|^2}+o\Big{(}\frac{1}{|x|^2}\Big{)} \qquad \text{as $|x| \to \infty$.} 
\end{align*}
Since 
\begin{equation*}
    \max_{\beta} \beta(1-\beta)=1/4,
\end{equation*}
which is reached for $\beta=1/2$, $W_{1/2,\gamma}$ is the largest among the $W_{\beta,\gamma}$ at infinity.\par
On the other hand, in order to maximize the value of $W_{\beta,\gamma}$ at $o$, $\gamma$ has to be taken as large as possible, namely $\gamma=q^{-1/2}+q^{1/2}-2^\beta$. Since this quantity is maximum for $\beta=0$, the largest weight at $o$ is $W_{0,\overline{\gamma}}$ with $\overline{\gamma}=q^{-1/2}+q^{1/2}-1$. Notice that: $W_{0,\overline{\gamma}}\equiv W_{opt}$ for $|x| \ge 2$, while $W_{0,\overline{\gamma}}(o)>W_{opt}(o)$ and $W_{opt}(|x|=1)>W_{0,\overline{\gamma}}(|x|=1)$, hence the two weights are not globally comparable.
\nero
\end{remark}
The previous remark suggests that, in order to have the largest weight at infinity, one has to fix $\beta=1/2$ in Theorem \ref{alphabeta}. This intuition is somehow confirmed by the statement below.  \begin{theorem}\label{optimalHf}For all $q^{-1/2} \leq \gamma \leq q^{-1/2}+q^{1/2}-2^{1/2}$ the following inequality holds
\begin{align*}
  \frac 12 \sum\limits_{\substack{x,y \in \mathbb{T}_{q+1} \\ x \sim y}}\bigg{(}\varphi(x)-\varphi(y)\bigg{)}^2\ge \sum_{x \in \mathbb{T}_{q+1}}{W_{1/2,\gamma}}(x)\varphi^2(x) \qquad \forall \varphi \in C_0(\mathbb{T}_{q+1}),
\end{align*}
where 
\begin{align*}
   {W_{1/2,\gamma}}(x) = \begin{cases}  
   q+1-q^{1/2}(\frac{1}{\gamma}+\frac{1}{q\gamma}) &\text{if $|x| = 0$, } \\ 
q+1-{q^{1/2}(2^{1/2}+\gamma}) &\text{if $|x|=1$,} \\ 
    q+1-q^{1/2}[(1+\frac{1}{|x|})^{1/2}+(1-\frac{1}{|x|})^{1/2}] &\text{if $|x| \ge 2$}.
    \end{cases}
\end{align*}
Furthermore, the weights ${W_{1/2,\gamma}}$ are optimal Hardy weights for $\Delta$ in the sense of Definition \ref{defopt}.
\end{theorem}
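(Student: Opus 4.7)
The plan is to apply the classical supersolution/ground-state method on graphs: exhibit an explicit positive function $u$ satisfying $(\Delta-W_{1/2,\gamma})u\equiv 0$ on $\tq$; deduce the Hardy inequality from the discrete Fitzsimmons identity; and read off the three properties required by Definition~\ref{defopt} from the asymptotics of $u$.

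Motivated by the $|x|\ge 2$ piece of $W_{1/2,\gamma}$, the natural candidate is the radial function
$$
u(x)\;:=\;\begin{cases}\gamma, & |x|=0,\\ q^{-|x|/2}\,|x|^{1/2}, & |x|\ge 1.\end{cases}
$$
Since a vertex $x$ with $|x|=n\ge 1$ has exactly $q$ neighbours at distance $n+1$ and one at distance $n-1$, the identity $\Delta u=W_{1/2,\gamma}\,u$ splits into three cases that match precisely the three pieces in the definition of $W_{1/2,\gamma}$: for $n\ge 2$ it reduces, after factoring $q^{-n/2}n^{1/2}$, to the defining expression for $W_{1/2,\gamma}(x)$; the case $n=1$ forces $u(o)=\gamma$; and the case $n=0$ determines the given value of $W_{1/2,\gamma}(o)$. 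The hypothesis $q^{-1/2}\le\gamma\le q^{-1/2}+q^{1/2}-2^{1/2}$ is exactly what makes $W_{1/2,\gamma}\ge 0$ on $\tq$ (positivity on $|x|\ge 2$ is automatic by concavity of the square root). Writing $\varphi=u\psi$ and using the edge-wise identity
$$
(u(x)\psi(x)-u(y)\psi(y))^2 \;=\; u(x)u(y)(\psi(x)-\psi(y))^2 +(u(x)-u(y))(u(x)\psi(x)^2-u(y)\psi(y)^2),
$$
summed over edges, one obtains the ground-state representation
$$
h_\Delta(\varphi)-\sum_{x\in\tq}W_{1/2,\gamma}(x)\varphi(x)^2 \;=\;\tfrac12\sum_{x\sim y}u(x)u(y)\Bigl(\tfrac{\varphi(x)}{u(x)}-\tfrac{\varphi(y)}{u(y)}\Bigr)^2\;\ge\;0,
$$
which is the required Hardy inequality, with an explicit nonnegative remainder.

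Optimality in the sense of Definition~\ref{defopt} is then verified property by property. \emph{Null-criticality} follows at once from $u(x)^2 W_{1/2,\gamma}(x)\asymp q^{-n}n\,\Lambda_q$ together with $|\{|x|=n\}|=(q+1)q^{n-1}$, which make $\sum_x u^2 W_{1/2,\gamma}$ diverge like $\Lambda_q\sum_n n$. For \emph{criticality} I would invoke \cite[Theorem~5.3]{pinchoverC}, reducing the task to showing that $u$ is the unique positive $(\Delta-W_{1/2,\gamma})$-harmonic function up to scalars: by averaging over the stabiliser of $o$ in $\mathrm{Aut}(\tq)$ (which preserves both $\Delta$ and the radial weight $W_{1/2,\gamma}$), any positive solution may be taken radial, and the radial recurrence $q\,f(n+1)+f(n-1)=q^{1/2}[(1+1/n)^{1/2}+(1-1/n)^{1/2}]\,f(n)$ admits two independent asymptotic solutions of the form $n^{1/2}q^{-n/2}$ and $n^{1/2}\log n\cdot q^{-n/2}$, of which $u$ is the one of minimal growth. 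For \emph{optimality near infinity}, assuming $h_\Delta-(1+\lambda)W_{1/2,\gamma}\ge 0$ on $C_0(\tq\setminus K)$ for some $\lambda>0$ and finite $K$, the ground-state identity yields $\lambda\sum_x W_{1/2,\gamma}u^2\psi^2\le\tfrac12\sum_{x\sim y}u(x)u(y)(\psi(x)-\psi(y))^2$ for every $\psi\in C_0(\tq\setminus K)$; testing with logarithmic cutoffs supported on expanding annuli keeps the right-hand side bounded while null-criticality forces the left-hand side to diverge.

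The genuinely delicate step is criticality: one must simultaneously justify the reduction from arbitrary positive $(\Delta-W_{1/2,\gamma})$-harmonic functions to radial ones (exploiting the $\mathrm{Aut}(\tq)$-symmetry of both operator and weight) and carry out the asymptotic analysis of the second-order recurrence needed to single out $u$ as the subdominant solution. Everything else is a routine computation once the candidate $u$ has been guessed from the shape of $W_{1/2,\gamma}$.
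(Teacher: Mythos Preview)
Your route differs from the paper's. The paper does not work directly with the single ground state $u$; instead it introduces an auxiliary Schr\"odinger operator $H=\Delta+Q$ for a specific potential $Q$, checks that the pair $u_1(x)=q^{-|x|/2}$ and $v_1(x)=|x|\,q^{-|x|/2}$ (both extended by $\gamma$ at $o$) are positive, $H$-superharmonic, and $H$-harmonic on $\{|x|\ge 2\}$, and then invokes \cite[Theorem~1.1]{pinchover}. Since $u_1/v_1=1/|x|$ is proper with bounded neighbour ratios, that theorem gives directly that $\widetilde W=H[(u_1v_1)^{1/2}]/(u_1v_1)^{1/2}$ is an optimal Hardy weight for $H$; the three optimality conditions for $W_{1/2,\gamma}=\widetilde W-Q$ relative to $\Delta$ then follow by easy transfer arguments (criticality is immediate, while null-criticality and optimality near infinity are obtained by comparison with the already-optimal $W_{opt}$ of Proposition~\ref{optimal1}). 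Note that $(u_1v_1)^{1/2}$ is precisely your $u$, so the two proofs share the same ground state but organise the optimality verification very differently: the paper outsources it to a black-box theorem, whereas you attempt a self-contained check.

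Your Hardy inequality, null-criticality, and optimality-near-infinity arguments are correct. The criticality argument, however, has a real gap. Averaging a positive $(\Delta-W_{1/2,\gamma})$-harmonic function $f$ over the stabiliser of $o$ produces a radial positive harmonic function $\bar f$, but $\bar f=c\,u$ does \emph{not} force $f=c\,u$: on a tree the cone of positive harmonic functions for a subcritical Schr\"odinger operator is typically infinite-dimensional (parametrised by a Martin boundary identified with $\partial\tq$), and symmetry alone cannot exclude non-radial positive solutions. Your asymptotic analysis of the radial recurrence is correct but only pins down the \emph{radial} solution space. The clean repair is to bypass uniqueness altogether: after the ground-state transform, criticality of $h_\Delta-W_{1/2,\gamma}$ is equivalent to recurrence of $\tq$ equipped with the edge weights $b(x,y)=u(x)u(y)$ (this is the content behind the criterion you quote from \cite{pinchoverC}). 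The total conductance between the spheres $\{|x|=n\}$ and $\{|x|=n+1\}$ is $(q+1)q^{n}\cdot u(n)u(n+1)\asymp n$, so the series of resistances $\sum_n 1/n$ diverges and the weighted graph is recurrent. This is exactly the logarithmic-cutoff computation you already carry out for optimality near infinity, so no new ingredient is needed---only a reorganisation of that step to serve double duty.
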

Using the same argument it is also possible to show that the weights we obtained in Theorem \ref{alphabeta} are optimal near infinity, i.e. the constant is sharp in $\mathbb{T}_{q+1} \setminus K$ for every compact set $K$. 
\begin{corollary}\label{sharp}
For all $0 \le \beta < \min\{\log_2 q^{1/2},1\}$ and $q^{-1/2}\leq \gamma \leq q^{-1/2}+q^{1/2}-2^{\beta}$  the following inequality holds  
\begin{align}\label{sharpineq}
     \frac{1}{2} \sum\limits_{\substack{x,y \in \mathbb{T}_{q+1} \\ x \sim y}}\bigg(\varphi(x)-\varphi(y) \bigg)^2 \ge  \sum_{x \in \mathbb{T}_{q+1}}W_{ \beta,\gamma}(x) \varphi^2(x)\qquad \forall \varphi\in C_0(\mathbb{T}_{q+1}).
\end{align}
Moreover, the constant $1$ in front of the r.h.s. term is sharp at infinity, in the sense that inequality \eqref{sharpineq} fails on $C_0(\mathbb{T}_{q+1}\setminus K)$ if we replace $W_{\beta,\gamma}$ with $CW_{\beta,\gamma}$, for all $C>1$ and all compact set $K$. 
\end{corollary}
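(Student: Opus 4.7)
The plan is as follows. Since the inequality \eqref{sharpineq} is exactly the content of Theorem \ref{alphabeta}, only the sharpness assertion requires attention, and we will obtain it by reducing to the optimality near infinity of some critical weight $W_{1/2,\gamma'}$ supplied by Theorem \ref{optimalHf}. Because $q\geq 2$, the admissible interval $[q^{-1/2},\,q^{-1/2}+q^{1/2}-2^{1/2}]$ for Theorem \ref{optimalHf} is non-empty, and we can fix once and for all $\gamma':=q^{-1/2}$; then $W_{1/2,\gamma'}$ is an optimal Hardy weight for $\Delta$ on $\mathbb{T}_{q+1}$, in particular satisfying the optimality-near-infinity property of Definition \ref{defopt}.

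The key analytical input will be a comparison of the two weights at infinity. From the explicit expressions one reads off
\[
\lim_{|x|\to\infty} W_{\beta,\gamma}(x)\;=\;\lim_{|x|\to\infty} W_{1/2,\gamma'}(x)\;=\;\Lambda_q,
\]
with $\Lambda_q=(q^{1/2}-1)^2>0$ as soon as $q\geq 2$. Consequently, for any $C>1$, the difference $CW_{\beta,\gamma}(x)-W_{1/2,\gamma'}(x)$ tends to $(C-1)\Lambda_q>0$ as $|x|\to\infty$, and fixing any $\lambda\in(0,C-1)$ there exists $R>0$ (depending on $C,\lambda$) such that
\[
CW_{\beta,\gamma}(x)\;\ge\;(1+\lambda)\,W_{1/2,\gamma'}(x)\qquad \text{for every } x \text{ with } |x|>R.
\]

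With this pointwise comparison at hand, the sharpness follows by contradiction. Suppose \eqref{sharpineq} continued to hold with $W_{\beta,\gamma}$ replaced by $CW_{\beta,\gamma}$ on $C_0(\mathbb{T}_{q+1}\setminus K)$ for some $C>1$ and some finite $K$. Setting $K':=K\cup B_{R+1}(o)$, still a finite set, the assumption together with the pointwise inequality above would give
\[
h_\Delta(\varphi)\;\ge\;C\sum_{x\in\mathbb{T}_{q+1}}W_{\beta,\gamma}(x)\,\varphi^2(x)\;\ge\;(1+\lambda)\sum_{x\in\mathbb{T}_{q+1}}W_{1/2,\gamma'}(x)\,\varphi^2(x)
\]
for every $\varphi\in C_0(\mathbb{T}_{q+1}\setminus K')$. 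But this is precisely the inequality $h_\Delta-W_{1/2,\gamma'}\ge \lambda\,W_{1/2,\gamma'}$ on $C_0(\mathbb{T}_{q+1}\setminus K')$, which is forbidden by the optimality near infinity of $W_{1/2,\gamma'}$ guaranteed by Theorem \ref{optimalHf}; this contradiction yields the claim.

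The main obstacle, if any, is purely conceptual: one must identify the right comparison weight. The essential observation is that both $W_{\beta,\gamma}$ and $W_{1/2,\gamma'}$ share the same strictly positive limit $\Lambda_q$ at infinity, so a multiplicative loss $C>1$ translates into an order-one additive gap at infinity, which is more than enough to absorb a factor $(1+\lambda)$ against $W_{1/2,\gamma'}$ on a cofinite set. Once this is set up, the verification reduces to an ordering of positive potentials and does not require knowledge of the finer $1/|x|^2$ asymptotics of the weights.
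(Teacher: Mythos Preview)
Your argument is correct and follows the same overall strategy as the paper: reduce the sharpness to the optimality near infinity of an already-established optimal weight via a pointwise comparison on a cofinite set. The only difference is in the choice of comparison weight. The paper compares with $W_{opt}$ from Proposition~\ref{optimal1}, observing that for $0\le\beta<1$ one has $W_{\beta,\gamma}\ge W_{opt}$ on $B_2(o)^c$ (since $(1+1/|x|)^\beta+(1-1/|x|)^\beta\le 2$ by concavity), so that assuming $h_\Delta\ge C\,W_{\beta,\gamma}$ on $C_0(\mathbb{T}_{q+1}\setminus K)$ immediately yields $h_\Delta-W_{opt}\ge (C-1)W_{opt}$ on $C_0(\mathbb{T}_{q+1}\setminus(K\cup B_2(o)))$, contradicting the optimality of $W_{opt}$. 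You instead compare with $W_{1/2,\gamma'}$ from Theorem~\ref{optimalHf}; since $W_{1/2,\gamma'}>W_{\beta,\gamma}$ may occur for $\beta\neq 1/2$, you compensate by using that both weights share the limit $\Lambda_q>0$, so the multiplicative gap $C>1$ eventually dominates. The paper's route is marginally more economical (a fixed cofinite set $B_2(o)^c$ and no asymptotic step), while your route has the mild advantage of not needing the separate optimality result for $W_{opt}$ and of working whenever the weights have a common positive limit at infinity.
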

\subsection{Improved Poincar\'e inequalities}
We shall provide three examples of improved Poincar\'e inequalities derived by the Hardy-type inequalities stated in the previous subsection. We recall that the Poincar\'e inequality on $\mathbb{T}_{q+1}$ writes
\begin{align}\label{punto}
    \frac12 \sum\limits_{\substack{x,y \in \mathbb{T}_{q+1} \\  x \sim y}} \bigg{(}\varphi(x)-\varphi(y)\bigg{)}^2 \ge \Lambda_q \sum_{x \in \mathbb{T}_{q+1}} \varphi^2(x) \qquad \forall \varphi \in C_0(\mathbb{T}_{q+1}),
\end{align}
and the constant $\Lambda_q$ is sharp in the sense that the above inequality cannot hold with a constant $\Lambda > \Lambda_q$.

The following improved Poincar\'e inequality is an immediate consequence of Theorem \ref{optimal1}.
\begin{proposition}
The following inequality holds 
\begin{align}\label{asterisco}
  \frac 12 \sum\limits_{\substack{x,y \in \mathbb{T}_{q+1} \\ x \sim y}}\bigg{(}\varphi(x)-\varphi(y)\bigg{)}^2\ge \Lambda_q\sum_{x \in \mathbb{T}_{q+1}} \varphi^2(x) + \sum_{x \in \mathbb{T}_{q+1}} R_{q}(x) \varphi^2(x) \qquad \forall \varphi \in C_0(\mathbb{T}_{q+1}),
\end{align}
where
\begin{align*}
    R_{q}(x) = \begin{cases} q^{1/2}-q^{-1/2} &\text{if $|x|=0$,}
\\
0 &\text{otherwise.}
\end{cases} 
\end{align*}
Furthermore, the operator $\Delta-\Lambda_q-R_q$ is critical, hence the inequality does not hold with any $R>R_q$.
\end{proposition}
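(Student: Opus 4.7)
The plan is very short: this Proposition is essentially a reformulation of Proposition~\ref{optimal1} combined with the definition of criticality, so my proof proposal will mostly consist of reading off the algebraic identity $W_{opt}=\Lambda_q+R_q$ and then unwinding what ``critical'' means.

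First I would note that, by the explicit formula \eqref{numerostella} for $W_{opt}$, one has the pointwise identity
\begin{equation*}
W_{opt}(x)=\Lambda_q+R_q(x)\qquad \forall x\in\mathbb{T}_{q+1},
\end{equation*}
with $R_q$ defined as in the statement (take $R_q(o)=q^{1/2}-q^{-1/2}$ at $|x|=0$ and $R_q\equiv 0$ elsewhere). Substituting this decomposition into the Hardy inequality of Proposition~\ref{optimal1} gives \eqref{asterisco} immediately, with no further computation.

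Next I would address the criticality claim. Since $W_{opt}$ is an \emph{optimal} Hardy weight for $\Delta$ in the sense of Definition~\ref{defopt} (again from Proposition~\ref{optimal1}), by that definition the operator $\Delta-W_{opt}$ is critical. In view of the identity above, this is precisely the criticality of $\Delta-\Lambda_q-R_q$.

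Finally, for the last assertion I would argue by contradiction: suppose there exists a nonnegative $R\in C(\mathbb{T}_{q+1})$ with $R\ge R_q$ and $R\not\equiv R_q$ such that \eqref{asterisco} still holds with $R$ in place of $R_q$. Then the nonnegative, nontrivial function $W:=R-R_q$ would satisfy $h_\Delta(\varphi)-\langle (\Lambda_q+R_q)\varphi,\varphi\rangle\ge \langle W\varphi,\varphi\rangle$ for every $\varphi\in C_0(\mathbb{T}_{q+1})$, i.e. $(\Delta-\Lambda_q-R_q)-W\ge 0$ on $C_0(\mathbb{T}_{q+1})$. This would make $\Delta-\Lambda_q-R_q$ subcritical, contradicting the criticality just established. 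I do not foresee any real obstacle here: the only thing to double-check is that $R_q(o)=q^{1/2}-q^{-1/2}$ matches $W_{opt}(o)-\Lambda_q=(q^{1/2}-1)^2+q^{1/2}-q^{-1/2}-(q^{1/2}-1)^2$, which is straightforward arithmetic.
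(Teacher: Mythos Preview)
Your proposal is correct and follows exactly the paper's approach: the paper simply states that this Proposition is ``an immediate consequence'' of Proposition~\ref{optimal1}, and your argument spells out precisely why, namely the pointwise identity $W_{opt}=\Lambda_q+R_q$ together with the criticality of $\Delta-W_{opt}$ from Definition~\ref{defopt}.
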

Notice that \eqref{asterisco} improves \eqref{punto} only locally, namely at $o$. The next statement provides a global improvement of \eqref{punto}.
\begin{theorem}\label{POMIGLIORATA} For all $0 \le \beta \le \log_2\big{(}\frac{3}{2}-\frac{1}{2q}\big{)}$ and $\frac12 + \frac{1}{2q} \le \gamma \le 2-2^\beta $, it holds
\begin{align}\label{asterisco2}
   \frac 12 \sum\limits_{\substack{x,y \in \mathbb{T}_{q+1} \\ x \sim y}}\bigg{(}\varphi(x)-\varphi(y)\bigg{)}^2\ge \Lambda_q\sum_{x \in \mathbb{T}_{q+1}} \varphi^2(x) + \sum_{x \in \mathbb{T}_{q+1}} R_{\beta, \gamma} \varphi^2(x) \qquad \forall \varphi \in C_0(\mathbb{T}_{q+1}),
\end{align}
where  
\begin{align*}
  0 \le R_{\beta,\gamma}(x) = \begin{cases} q^{1/2}(2-\frac{1}{\gamma}-\frac{1}{q\gamma}) &\text{if $|x|=0$,}\\
q^{1/2}(2-2^{\beta}-\gamma) &\text{if $|x|=1$,} \\ 
q^{1/2}\bigg{(}2-(1+\frac{1}{|x|})^\beta-(1-\frac{1}{|x|})^\beta\bigg{)} &\text{if $|x| \ge 2$}.
\end{cases}
\end{align*}
\end{theorem}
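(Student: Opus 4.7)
The plan is to deduce Theorem \ref{POMIGLIORATA} directly from Theorem \ref{alphabeta} by a simple algebraic rearrangement together with an elementary positivity check. The first step is to observe that, since $\Lambda_q = (q^{1/2}-1)^2 = q+1 - 2q^{1/2}$, the weight $W_{\beta,\gamma}$ from Theorem \ref{alphabeta} admits the decomposition
\begin{equation*}
W_{\beta,\gamma}(x) = \Lambda_q + R_{\beta,\gamma}(x) \qquad \forall x \in \mathbb{T}_{q+1},
\end{equation*}
as can be verified by a one-line computation in each of the three cases $|x|=0$, $|x|=1$, $|x|\geq 2$. Granted this identity and the non-negativity of $R_{\beta,\gamma}$, the inequality \eqref{asterisco2} is obtained by subtracting $\Lambda_q \sum_x \varphi^2(x)$ from both sides of the inequality given by Theorem \ref{alphabeta}.

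The second step, where the new hypotheses on $\beta$ and $\gamma$ intervene, is to verify that $R_{\beta,\gamma}(x) \geq 0$ for every $x \in \mathbb{T}_{q+1}$. At the origin the required inequality $2 - \gamma^{-1}(1 + q^{-1}) \geq 0$ is equivalent to $\gamma \geq \frac{1}{2} + \frac{1}{2q}$, which is one of the assumptions. At vertices with $|x|=1$, $R_{\beta,\gamma}(x) \geq 0$ is equivalent to $\gamma \leq 2 - 2^\beta$, again an assumption. For $|x|\geq 2$, since the bound $\beta \leq \log_2(\tfrac{3}{2} - \tfrac{1}{2q}) < 1$ forces the concavity of $t \mapsto t^\beta$ on $(0,\infty)$, one has
\begin{equation*}
\tfrac{1}{2}\Big[(1+\tfrac{1}{|x|})^\beta + (1-\tfrac{1}{|x|})^\beta\Big] \leq \Big(\tfrac{1}{2}(1+\tfrac{1}{|x|}) + \tfrac{1}{2}(1-\tfrac{1}{|x|})\Big)^\beta = 1,
\end{equation*}
which yields $R_{\beta,\gamma}(x) \geq 0$ automatically.

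Finally, one must check that the parameter window prescribed here sits inside the window $0 \leq \beta \leq \log_2 q^{1/2}$, $q^{-1/2} \leq \gamma \leq q^{-1/2} + q^{1/2} - 2^\beta$ of Theorem \ref{alphabeta}, so that this latter result is actually applicable. This reduces to two elementary AM--GM inequalities: $2q^{1/2} \leq q+1$ gives $q^{-1/2} \leq \frac{1}{2} + \frac{1}{2q}$, and $q^{1/2} + q^{-1/2} \geq 2$ gives $2 - 2^\beta \leq q^{-1/2} + q^{1/2} - 2^\beta$; together with the trivial bound $\log_2(\tfrac{3}{2} - \tfrac{1}{2q}) \leq \log_2 q^{1/2}$ for $q \geq 2$, these confirm the inclusion of the parameter ranges. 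There is no genuine obstacle here: the content of the theorem is entirely encoded in Theorem \ref{alphabeta}, and the present statement simply packages the critical threshold $\Lambda_q$ and a non-negative remainder, so the only real step is the positivity check described above.
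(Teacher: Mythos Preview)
Your proposal is correct and follows essentially the same approach as the paper: both deduce the result from Theorem \ref{alphabeta} by writing $W_{\beta,\gamma}=\Lambda_q+R_{\beta,\gamma}$ and then verifying $R_{\beta,\gamma}\ge 0$ case by case (the conditions on $\gamma$ at $|x|=0,1$ and the concavity argument for $|x|\ge 2$). You are in fact slightly more thorough than the paper in that you explicitly check the inclusion of the parameter window in that of Theorem \ref{alphabeta}, which the paper leaves implicit.
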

Notice that \eqref{asterisco2} improves globally \eqref{punto} but it gives no information about the sharpness of $R_{\beta,\gamma}$. A sharp improvement is instead provided by the next theorem which holds for functions supported outside the ball $B_2(o)$.
\begin{theorem}\label{2.11}  The following inequality holds
\begin{align}\label{optimalPoincar}
   \frac 12 \sum\limits_{\substack{x,y \in \mathbb{T}_{q+1} \\ x \sim y}}\bigg{(}\varphi(x)-\varphi(y)\bigg{)}^2\ge \sum_{x \in \mathbb{T}_{q+1}}\Lambda_q \varphi^2(x)+\sum_{x \in \mathbb{T}_{q+1}}\overline{R}(x)\varphi^2(x) \ \ \ \ \ \ \forall \varphi \in C_0(\mathbb{T}_{q+1}\setminus B_2(o)),
\end{align}
 where 
\begin{align*}
   \overline{R}(x) =
    q^{1/2}\bigg{[}2-\bigg{(}1+\frac{1}{|x|}\bigg{)}^{1/2}-\bigg{(}1-\frac{1}{|x|}\bigg{)}^{1/2}\bigg{]} \qquad \text{if $|x| \ge 2$.}
\end{align*}
Moreover, the constant $q^{1/2}$ is sharp in the sense that inequality \eqref{optimalPoincar} cannot hold if we replace the remainder term $\overline{R}$ with $C\bigg{[}2-(1+\frac{1}{|x|})^{1/2}-(1-\frac{1}{|x|})^{1/2}\bigg{]}$ and $C>q^{1/2}$.
\end{theorem}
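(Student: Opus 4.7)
The plan is to derive inequality \eqref{optimalPoincar} directly from Theorem~\ref{optimalHf}, and to establish sharpness through a ground state substitution that reduces the problem to the failure of a plain one-dimensional Poincar\'e inequality on $\mathbb{R}$.

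For the inequality, take any $\varphi \in C_0(\mathbb{T}_{q+1}\setminus B_2(o))$. The $|x|\in\{0,1\}$ contributions on the right-hand side of Theorem~\ref{optimalHf} vanish, and for $|x|\ge 2$ one has $W_{1/2,\gamma}(x) = \Lambda_q+\overline{R}(x)$ (using $\Lambda_q = q + 1 - 2q^{1/2}$), yielding \eqref{optimalPoincar}. For the sharpness, I would work with the positive function $\Psi(x) := q^{-|x|/2}\,|x|^{1/2}$ defined for $|x|\ge 1$. A direct radial computation using $\Delta \Psi(x) = (q+1)\Psi_n - q\Psi_{n+1}-\Psi_{n-1}$ at $|x|=n$ shows that $\Delta \Psi = W_{1/2,\gamma}\Psi$ on $\{|x|\ge 2\}$. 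Combining the discrete product rule with Green's identity, one then obtains the ground state representation
\[
h_{\Delta}(\eta\Psi) \;=\; \sum_{x} W_{1/2,\gamma}(x)(\eta\Psi)^2(x) \;+\; \sum_{\{x,y\}\in E}\Psi(x)\Psi(y)\bigl(\eta(x)-\eta(y)\bigr)^2
\]
for every $\eta \in C_0(\mathbb{T}_{q+1}\setminus B_2(o))$.

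Now suppose, for contradiction, that \eqref{optimalPoincar} held with $q^{1/2}$ replaced by $q^{1/2}+\mu$ for some $\mu>0$. Setting $f(x):=2-(1+1/|x|)^{1/2}-(1-1/|x|)^{1/2}$, so that $\overline{R}=q^{1/2}f$ and $W_{1/2,\gamma}=\Lambda_q+q^{1/2}f$ on $\{|x|\ge 2\}$, applying this to $\varphi=\eta\Psi$ and using the identity above gives
\[
\sum_{\{x,y\}\in E}\Psi(x)\Psi(y)\bigl(\eta(x)-\eta(y)\bigr)^2 \;\ge\; \mu \sum_x f(x)(\eta\Psi)^2(x)
\]
for every radial $\eta\in C_0(\mathbb{T}_{q+1}\setminus B_2(o))$. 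Summing over the levels of the tree (there are $s_n=(q+1)q^{n-1}$ vertices at distance $n\ge 1$, $s_{n+1}$ edges joining consecutive levels, and one computes $s_{n+1}\Psi_n\Psi_{n+1}=(q+1)q^{-1/2}[n(n+1)]^{1/2}$ and $s_n\Psi_n^2=\tfrac{q+1}{q}n$), this reduces to a one-dimensional weighted Hardy inequality of the form
\[
\sum_{n\ge 2}[n(n+1)]^{1/2}\bigl(\eta_{n+1}-\eta_n\bigr)^2 \;\ge\; \tfrac{\mu}{q^{1/2}}\sum_{n\ge 2} n\, f_n\,\eta_n^{\,2},
\]
valid for every finitely supported $(\eta_n)_{n\ge 2}$ with $\eta_2=0$. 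Since $f_n = \tfrac{1}{4n^2}+O(1/n^4)$, the asymptotic content is $\sum n(\eta_{n+1}-\eta_n)^2\ge c\sum\eta_n^2/n$ for a constant $c>0$ proportional to $\mu$. Under the substitution $\eta_n = \widetilde u(\log n)$, Riemann-sum comparison (with increment $1/n$ corresponding to $\mathrm{d}s$) identifies this asymptotically with the Poincar\'e inequality $\int_{\mathbb{R}}\widetilde u'(s)^2\,\mathrm{d}s \ge c\int_{\mathbb{R}}\widetilde u(s)^2\,\mathrm{d}s$ on compactly supported $\widetilde u$. The latter fails for every $c>0$: taking $\widetilde u=g(\cdot/L)$ for a fixed bump $g$ and letting $L\to\infty$ produces a violating sequence, which translates on the tree to radial cut-offs supported in geometrically growing annuli $\{N\le|x|\le Ne^L\}$ that contradict the assumption, proving the sharpness of $q^{1/2}$.

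The main technical obstacle lies in the asymptotic step. One has to replace $[n(n+1)]^{1/2}$ by $n$ and $nf_n$ by $1/(4n)$ with uniform error control on the supports of the test functions (which must be pushed out to infinity), show that the discrete-to-continuous reduction via $s = \log n$ produces only lower-order errors as $L$ and $N$ diverge in a coordinated way, and verify that the boundary term at $n=2$ in the layerwise sum is negligible for such test functions. These are routine asymptotic estimates, but they must be organised carefully so that a single sequence of test functions contradicts the hypothesis $\mu > 0$.
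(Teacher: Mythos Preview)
Your derivation of the inequality is the same as the paper's: both note that for $\varphi$ supported on $\{|x|\ge 2\}$ the weight $W_{1/2,\gamma}$ from Theorem~\ref{optimalHf} equals $\Lambda_q+\overline R$.

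For sharpness, however, the two arguments diverge. The paper does not build test functions at all. It observes that, in the notation of the proof of Theorem~\ref{optimalHf}, on $\{|x|\ge 2\}$ one has $Q=-\Lambda_q$ and hence $\widetilde W=W_{1/2,\gamma}+Q=\overline R$. If \eqref{optimalPoincar} held with $\overline R$ replaced by $\tfrac{C}{q^{1/2}}\overline R$ for some $C>q^{1/2}$, then for $\varphi\in C_0(\mathbb T_{q+1}\setminus B_2(o))$ one would get $h_H(\varphi)=h_\Delta(\varphi)-\Lambda_q\|\varphi\|^2\ge \tfrac{C}{q^{1/2}}\sum\widetilde W\varphi^2$, i.e.\ $h_H-\widetilde W\ge(\tfrac{C}{q^{1/2}}-1)\widetilde W$ on $C_0(\mathbb T_{q+1}\setminus B_2(o))$, contradicting the optimality near infinity of $\widetilde W$ for $H$ already obtained from \cite[Theorem~1.1]{pinchover}. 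This is a one-line reduction.

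Your route---ground state representation with $\Psi(x)=q^{-|x|/2}|x|^{1/2}$, radial reduction to a one-dimensional weighted inequality, and then a $\log$-scaling argument that mimics the failure of the Poincar\'e inequality on $\mathbb R$---is correct and essentially reproves, in this concrete case, the ``optimality near infinity'' part of the Keller--Pinchover--Pogorzelski machinery that the paper invokes as a black box. The layer computations you give are right (note that the boundary contribution at $n=1$ is $\sqrt{2}\,\eta_2^{\,2}$, which your restriction $\eta_2=0$ legitimately kills; the violating sequence you describe is supported far from $o$ anyway). The asymptotic step is indeed routine once you take $\eta_n=g(\log n/L)$ with a fixed bump $g$ and send $L\to\infty$: the left side scales like $L^{-1}$ and the right like $L$. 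So your argument is longer and requires the bookkeeping you flag, but it is self-contained and does not rely on \cite[Theorem~1.1]{pinchover}; the paper's argument is much shorter precisely because that heavy lifting has already been absorbed into Theorem~\ref{optimalHf}.
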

\section{Proofs of the results}
We collect here the proofs of the results stated in Section 2. 
\subsection{Proofs of Hardy-type  inequalities}
\begin{proof}[Proof of Proposition \ref{optimal1}]  Consider the function $\Tilde{u}(x)= \sqrt{G(x,o)}$, where $G$ is the Green function on $\mathbb{T}_{q+1}$. By \cite[Theorem  0.2]{pinchover} we only need to show that
\begin{equation*}
   \frac{\Delta {\Tilde{u}(x)}}{{\Tilde{u}(x)}}=W_{opt}(x).
\end{equation*}
By the explicit formula for the Green function on $\mathbb{T}_{q+1}$ given in \cite[Lemma 1.24]{Woess} we have
\begin{equation*}
    \Tilde{u}(x) = \sqrt{\frac{q}{q-1}\bigg(\frac{1}{q}\bigg{)}^{|x|}}.
\end{equation*}
For $x \ne o$, we obtain that 
\begin{align*}
  \frac{\Delta \Tilde{u}(x)}{\Tilde{u}(x)} &= \bigg( q+1-\bigg(\frac{1}{q} \bigg)^{1/2} - q\bigg(\frac{1}{q} \bigg)^{-1/2}\bigg)= \big({q^{1/2}} -1\big)^2 = \Lambda_q.
  \end{align*}
For $x=o$ we get
  \begin{align*}
       \frac{\Delta \Tilde{u}(o)}{\Tilde{u}(o)} &= \Bigg[(q+1)  \bigg{(}{\frac{ {q}}{q-1}}\bigg{)}^{1/2} - (q+1) \Bigg{(}\frac{1}{({q-1})^{1/2}} \Bigg{)} \Bigg]  \bigg{(}{\frac{q-1}{q}}\bigg{)}^{1/2} \\ &= q+1 -\frac{q+1}{{q}^{1/2}}=\Lambda_q+q^{1/2}-q^{-1/2} > \Lambda_q.
  \end{align*}
\end{proof}
\bigskip
\begin{proof}[Proof of Theorem \ref{alphabeta}] The statement follows from \cite[Proposition 3.1]{Golenia} by providing a suitable positive super-solution to the equation $\Delta u=W_{\beta,\gamma} u$ in $\tq$. To this aim, we define the function:
\begin{align}\label{numerocerch}
    u_{\beta, \gamma}(x) = \begin{cases} q^{{-|x|/2}}|x|^\beta &\text{if $|x| \ge 1$, } \\ 
\gamma &\text{if $|x|=0$.} \end{cases}
\end{align}
Now, by writing $u=u_{ \beta, \gamma}$, we have
\begin{align*}
    \frac{\Delta u(o)}{u(o)} &= q+1-(q+1)\frac{q^{-1/2}}{\gamma}=q+1-q^{1/2}\Big(\frac{1}{\
    \gamma}+\frac{1}{q\gamma }\Big), 
    \end{align*}
    which is nonnegative if $\gamma \ge {q}^{-1/2}$.\\
    {Next, for every $x$ such that $|x|=1$, we have} 
    \begin{align*}
    \frac{\Delta u(x)}{u(x)}&= q+1-q\frac{q^{-1}2^\beta}{q^{-1/2}} - \frac{\gamma}{q^{-1/2}} =q+1-{q^{1/2}(2^\beta+\gamma}), 
    \end{align*}
    which is nonnegative if  $\gamma \le q^{1/2}+{q}^{-1/2}-2^\beta $. The restriction $\beta \le 1/2 \log_2 q$ comes out to make consistent $q^{-1/2} \le \gamma \le q^{-1/2}+q^{1/2} -2^\beta$.\\ 
    Finally, for every $x$ such that $|x|\ge 2$, we have
    \begin{align}\label{x>=2}
    \frac{\Delta u(x)}{u(x)}&=q+1-q\frac{q^{-(|x|+1)/2}(|x|+1)^\beta}{q^{-|x|/2}|x|^\beta} - \frac{q^{-(|x|-1)/2}(|x|-1)^\beta}{q^{-|x|/2}|x|^\beta} \nonumber \\ &=q+1-{q^{1/2}\bigg[\bigg{(}1+\frac{1}{|x|}\bigg{)}^\beta}+\bigg{(}1-\frac{1}{|x|}\bigg{)}^\beta\bigg] \ge 0. 
\end{align}
 If $\beta\le1$, then the function $f:\mathbb{R}^+ \to \mathbb{R}$ defined by $f(x)=x^\beta$ is concave. It follows that 
\begin{align*}
    f\bigg{(}\frac{1}{2}\bigg{(}1+\frac{1}{|x|}\bigg{)}+\frac12\bigg{(}1-\frac{1}{|x|}\bigg{)}\bigg{)}=f(1) \ge \frac 12 f\bigg{(}1+\frac{1}{|x|}\bigg{)}+\frac 12f\bigg{(}1-\frac{1}{|x|}\bigg{)},
\end{align*}
that is equivalent to 
\begin{equation*}
    2 \ge \bigg{(}1+\frac{1}{|x|}\bigg{)}^\beta + \bigg{(}1-\frac{1}{|x|}\bigg{)}^\beta.
\end{equation*}
Then, 
\begin{equation*}
    \frac{\Delta u(x)}{u(x)} =q+1-q^{1/2}\bigg{[}\bigg{(}1+\frac{1}{|x|}\bigg{)}^\beta + \bigg{(}1-\frac{1}{|x|}\bigg{)}^\beta\bigg{]} \ge q+1-2q^{1/2}=\Lambda_q >0\qquad \forall |x|\geq 2,
\end{equation*}
which proves \eqref{x>=2}. 

If $\log_2 q^{1/2} \ge \beta >1$, notice that the function $h : [2, + \infty) \to \mathbb{R}$ defined by $h(x)=(1+\frac{1}{x})^\beta+(1-\frac{1}{x})^\beta$ is decreasing. 
 Then $h$ reaches its maximum at 2. Thus to show \eqref{x>=2} it suffices to prove that
\begin{align}\label{secondocaso}
h(x)\leq h(2)=    \bigg{(}\frac{3}{2}\bigg{)}^\beta+\bigg{(}\frac{1}{2}\bigg{)}^\beta \le q^{1/2}+q^{-1/2}.
\end{align}
Notice that for every $\beta\geq 1$ 
\begin{align*}
    \frac{{\rm{d}}}{ {\rm{d}} \beta} \bigg{[} \bigg{(}\frac{3}{2}\bigg{)}^\beta+\bigg{(}\frac{1}{2}\bigg{)}^\beta \bigg{]} =2^{-\beta}(3^\beta \log(3/2)-\log(2)) \ge 0.
\end{align*}
Hence  
\begin{align*}
\bigg{(}\frac{3}{2}\bigg{)}^\beta+\bigg{(}\frac{1}{2}\bigg{)}^\beta \le    \bigg{(}\frac{3}{2}\bigg{)}^{\log_2 q^{1/2}}+\bigg{(}\frac{1}{2}\bigg{)}^{\log_2 q^{1/2}} \le 2^{\log_2 q^{1/2}}+2^{-\log_2 q^{1/2}}=q^{1/2}+q^{-1/2},
\end{align*}
so that \eqref{secondocaso} holds and the proof is concluded.
\end{proof}
\begin{remark}
Note that the statement of Theorem $\ref{alphabeta}$ can be enriched by considering the family of radial functions
\begin{align*}
    u_{\alpha,\beta, \gamma}(x) = \begin{cases} q^{{\alpha}|x|}|x|^\beta &\text{if $|x| \ge 1$, } \\ 
\gamma &\text{if $|x|=0$,} \end{cases}
\end{align*}
with $\alpha \in \mathbb{R}$ and $\beta$ and $\gamma$ as in Theorem $\ref{alphabeta}$. Indeed, a straightforward computation shows that for $|x| \ge 2$ 
\begin{align*}
    W_{\alpha,\beta, \gamma}(x)=\frac{\Delta u_{\alpha,\beta,\gamma}(x)}{u_{\alpha,\beta,\gamma}(x)}=q+1-{q^{\alpha+1}\bigg(1+\frac{1}{|x|}\bigg)^\beta}-q^{-\alpha}\bigg(1-\frac{1}{|x|}\bigg)^\beta.
\end{align*}
Nevertheless,  
\begin{align*}
    W_{\alpha,\beta, \gamma}(x)= q+1-q^{1+\alpha}-q^{-\alpha}+ o(1) \qquad \text{as $|x| \to +\infty$},
\end{align*} 
which is maximum for $\alpha=-1/2$. Therefore, the choice $\alpha=-1/2$ turns out to be the best to get a weight as large as possible at $\infty$.
\end{remark}

We shall now prove our main result, i.e. Theorem \ref{optimalHf}.
\begin{proof}[Proof of Theorem \ref{optimalHf}]
Consider the Schr\"odinger operator $H:=\Delta+Q$, with 
\begin{align*}
    Q(x)=\begin{cases} 0 &\text{if $|x|=0,$} \\ 
    q^{1/2} &\text{if $|x|=1$,} \\
    -\Lambda_q &\text{if $|x| \ge 2$.}
    \end{cases}
\end{align*}
{{\it Step 1.}} We construct an optimal Hardy weight for $H$. To this aim, we exploit \cite[Theorem 1.1]{pinchover} that provides an optimal Hardy weight for  a Schr\"odinger operator $H$ by using $H$-harmonic functions.\\ For the sake of completeness we start by briefly recalling the statement of \cite[Theorem 1.1]{pinchover}: \\ given two positive $H$-superharmonic functions $u,v$ which are $H$-harmonic outside a finite set, if the function $u_0:=u/v$ is proper and $\sup_{x\sim y} u_0(x)/u_0(y) < + \infty$, then $\widetilde{W}:=\frac{H[(uv)^{1/2}]}{(uv)^{1/2}}$ is an optimal weight for $H$.\\ 
Next we define 
\begin{align*}
    u(x)&:=\begin{cases} {\gamma} &\text{if $|x|=0$,} \\
    q^{-|x|/2} &\text{if $|x| \ge 1$,}
    \end{cases} \\ 
    v(x)&:= \begin{cases} {\gamma} &\text{if $|x|=0$,} \\ 
    |x|q^{-|x|/2} &\text{if $|x| \ge 1$.}
    \end{cases}
\end{align*} Now we show that $u,v$ satisfy the hypothesis of the above-mentioned theorem.
\\  Indeed,
\begin{align*}
    &Hu(o)=(q+1)({\gamma}-q^{-1/2})+Q(o){\gamma} \ge 0, \\ 
    &Hv(o)=(q+1)({\gamma}-q^{-1/2})+Q(o){\gamma} \ge 0.
    \end{align*} 
    If $|x|=1$, then
    \begin{align*}
     Hu(x)&=(q+1)q^{-1/2}-qq^{-1}-{\gamma} + q^{-1/2}q^{1/2}= q^{1/2}+q^{-1/2}-{\gamma} \ge 2^{1/2}, \\ 
    Hv(x)&=(q+1)q^{-1/2}-2q^{-1}q-{\gamma} + Q(x)q^{-1/2} \\ &\ge q^{1/2}+q^{-1/2}-2-q^{-1/2}-q^{1/2}+ 2^{1/2}+1 =2^{1/2}+1-2 > 0.
\end{align*}
If $|x| \ge 2$, then
\begin{align*}
   Hu(x)&= (q+1)q^{-|x|/2}-qq^{-(|x|+1)/2}-q^{-(|x|-1)/2}-\Lambda_qq^{-|x|/2} \\ 
   &=q^{-|x|/2}(q+1-2q^{1/2}-\Lambda_q)=0; \\ 
    Hv(x) &= (q+1)|x|q^{-|x|/2}-(|x|+1)qq^{-(|x|+1)/2}-(|x|-1)q^{-(|x|-1)/2} -\Lambda_q q^{-|x|/2} \\ 
    &=|x|q^{-|x|/2}(q+1-2q^{1/2}-\Lambda_q)=0.
\end{align*}
Define now
\begin{align*}
    u_0(x):=\frac{u(x)}{v(x)}=\begin{cases}  1 &\text{if $|x|=0$,} \\ 
    \frac{1}{|x|} &\text{otherwise.} \end{cases}
\end{align*}
The function $u_0$ is proper  because 
    $\lim_{|x| \to \infty} u_0(|x|) = 0$
and $u_0(|x|)>u_0(|x|+1)>0$ for all $|x| \ge 1$, thus $u_0^{-1}(K)$ is finite for all compact set $K \subset (0, \infty)$. \\ 
Now consider $x \sim y$ and compute
\begin{align*}
    \frac{u_0(x)}{u_0(y)}= \begin{cases}   1 &\text{if $|x|=0$,} \\ {1}/{{\gamma}} &\text{if $|y|=0$ and $|x|=1$,} \\ 
    1+\frac{1}{|x|} &\text{if $|y|=|x|+1$ and $|x|\ge 1$,} \\
    1-\frac{1}{|x|} &\text{if $|y|=|x|-1$ and $|x| \ge 2$.}
    \end{cases}
\end{align*}
Thus 
   $ \sup\limits_{\substack{x,y \in \mathbb{T}_{q+1} \\ x \sim y}} \frac{u_0(x)}{u_0(y)} < + \infty.$
Hence, from \cite[Theorem 1.1]{pinchover} we conclude that the weight
\begin{align*}
    \widetilde{W}(x):&=\frac{H[(uv)^{1/2}](x)}{(uv)^{1/2}(x)}= \frac{\Delta(uv)^{1/2}(x)}{(uv)^{1/2}(x)}+ Q(x) \\ 
    &=\begin{cases} (q+1)(1-\frac{q^{-1/2}}{{\gamma}}) &\text{if $|x|=0$,} \\ 
  (q+1)-q^{1/2}(2^{1/2}+{\gamma})+q^{1/2} &\text{if $|x|=1$,} \\ 
   (q+1)-q^{1/2}[(1+\frac{1}{|x|})^{1/2}+(1-\frac{1}{|x|})^{1/2}]-\Lambda_q &\text{if $|x| \ge 2$}
   \end{cases}
\end{align*}
is an optimal weight for $H$. \\ 

\smallskip

{\it Step 2.} We derive an optimal Hardy weight for $\Delta$. To this aim we prove that the three conditions of Definition \ref{defopt} are satisfied by the operator $\Delta-W_{1/2,\gamma}$, where $W_{1/2,\gamma}:=\widetilde{W}-Q$.

\smallskip

$\bullet$ {\it Criticality:} the optimal Hardy inequality, obtained considering the quadratic form $h$ associated with $H$, namely
\begin{align*}
    \frac 12 \sum\limits_{\substack{x,y \in \mathbb{T}_{q+1} \\ x \sim y}}\bigg{(}\varphi(x)-\varphi(y)\bigg{)}^2+\sum_{x \in \mathbb{T}_{q+1}}Q(x)\varphi^2(x)\ge \sum_{x \in \mathbb{T}_{q+1}}\bigg{(}\frac{\Delta(uv)^{1/2}(x)}{(uv)^{1/2}(x)}+ Q(x)\bigg{)}\varphi^2(x)
\end{align*}
is equivalent to the Hardy inequality associated to $\Delta$
\begin{align*}
      \frac 12 \sum\limits_{\substack{x,y \in \mathbb{T}_{q+1} \\ x \sim y}}\bigg{(}\varphi(x)-\varphi(y)\bigg{)}^2 \ge \sum_{x \in \mathbb{T}_{q+1}}\frac{\Delta(uv)^{1/2}(x)}{(uv)^{1/2}(x)}\varphi^2(x) \qquad \forall \varphi \in C_0(\mathbb{T}_{q+1}).
\end{align*}
Moreover,
\begin{align*}
    {W_{1/2,\gamma}}(x):=\frac{\Delta(uv)^{1/2}(x)}{(uv)^{1/2}(x)}= \begin{cases} q+1-q^{1/2}(\frac{1}{{\gamma}}+\frac{1}{q\gamma}) &\text{if $|x|=0$,} \\ 
     q+1-q^{1/2}(2^{1/2}+{\gamma}) &\text{if $|x|=1$,} \\
       q+1-q^{1/2}[(1+\frac{1}{|x|})^{1/2}+(1-\frac{1}{|x|})^{1/2}] &\text{if $|x| \ge 2$,}
      \end{cases}
\end{align*}
is nonnegative. The optimality of $\widetilde{W}$ for $H$ implies that it does not exist a nonnegative function $f \not \equiv 0$  such that 
\begin{align*}
    \frac 12 \sum\limits_{\substack{x,y \in \mathbb{T}_{q+1} \\ x \sim y}}\bigg{(}\varphi(x)-\varphi(y)\bigg{)}^2 - \sum_{x \in \mathbb{T}_{q+1}}{W_{1/2,\gamma}}(x)\varphi^2(x) \ge \sum_{x \in \mathbb{T}_{q+1}}f(x) \varphi^2(x),
\end{align*}
or, equivalently, $\Delta - {W_{1/2,\gamma}}$ is critical.  

\smallskip

$\bullet$ {\it Null-criticality} : the function $z=(uv)^{1/2}$ is the ground state of $h_{\Delta} - {W_{1/2,\gamma}}$. Notice that 
\begin{align*}
  {W_{1/2,\gamma}}(x)> W_{opt}(x) \qquad \text{if $|x| \ge 2$},  \\ 
   z(x)>G^{1/2}(x) \qquad \text{if $|x| \ge 2$}, 
\end{align*}
where $W_{opt}$ is defined by \eqref{numerostella} and $G$ is the Green function. Then by Remark \ref{groundstate}
\begin{align*}
    \sum_{x \in \mathbb{T}_{q+1}} z^2(x){W_{1/2,\gamma}}(x) = +\infty.
\end{align*} 

\smallskip

$\bullet$ {\it Optimality near infinity} : suppose by contradiction that there exist $\overline{\lambda}>0$ and a compact set $K\subset\mathbb{T}_{q+1}$ such that 
\begin{align} \label{OPTINF}
     \frac 12 \sum\limits_{\substack{x,y \in \mathbb{T}_{q+1} \\ x \sim y}}\bigg{(}\varphi(x)&-\varphi(y)\bigg{)}^2-\sum_{x \in \mathbb{T}_{q+1} }{W_{1/2,\gamma}}(x)\varphi^2(x)  \ge \overline{\lambda}\sum_{x \in \mathbb{T}_{q+1}} {W_{1/2,\gamma}}(x)\varphi^2(x),
\end{align}
for all $\varphi \in C_0(\mathbb{T}_{q+1} \setminus K)$. Then, \eqref{OPTINF} holds true on $C_0(\mathbb{T}_{q+1}\setminus(K\cup B_2(o)))$. Notice that $W_{opt}\varphi^2 \le {W_{1/2,\gamma}}\varphi^2$ for all $\varphi \in C_0(\mathbb{T}_{q+1}\setminus(K\cup B_2(o)))$.
It follows that
\begin{align*}
      \frac 12 \sum\limits_{\substack{x,y \in \mathbb{T}_{q+1} \\ x \sim y}}\bigg{(}\varphi(x)-\varphi(y)\bigg{)}^2-\sum_{x \in \mathbb{T}_{q+1}}{W_{opt}}(x)\varphi^2(x) &\ge \frac12 \sum\limits_{\substack{x,y \in \mathbb{T}_{q+1} \\ x \sim y}}\bigg{(}\varphi(x)-\varphi(y)\bigg{)}^2-\sum_{x \in \mathbb{T}_{q+1}}{W_{1/2,\gamma}}(x)\varphi^2(x) \\ &\ge \overline{\lambda}\sum_{x \in \mathbb{T}_{q+1}}{W_{1/2,\gamma}}(x)\varphi^2(x) \ge \overline{\lambda}\sum_{x \in \mathbb{T}_{q+1}}
      {W_{opt}}(x)\varphi^2(x),
\end{align*}
for all $\varphi \in C_0(\mathbb{T}_{q+1}\setminus(K\cup B_2(o)))$. This is a contradiction because $W_{opt}$ is optimal for $\Delta$. We checked the three conditions given in Definition \ref{defopt}. Hence $W_{1/2,\gamma}$ is optimal for $\Delta$.
\end{proof}
\begin{proof}[Proof of Corollary \ref{sharp}] For $\beta < \min\{ 1/2 \log q, 1 \}$ we have that $W_{\beta,\gamma}>W_{opt}$ on $B_2(o)^c$. Then, the thesis follows by repeating the same argument used for proving \eqref{OPTINF}.
\end{proof}
\subsection{Proof of improved Poincar\'e inequalities}
\begin{proof}[Proof of Theorem \ref{POMIGLIORATA}]
Given $W_{\beta, \gamma}=\frac{\Delta u_{\beta, \gamma}}{u_{\beta,\gamma}}$, where $u_{\beta,\gamma}$ is defined by \eqref{numerocerch}, it is easy to check that $W_{\beta,\gamma}$ is larger than $\Lambda_q$ on $B_2(o)$ choosing the parameters  $0 \le \beta \le \log_2\big{(}\frac{3}{2}-\frac{1}{2q}\big{)}$ and $\frac12 + \frac{1}{2q} \le \gamma \le 2-2^\beta $ . \\
Indeed,
\begin{align*}
    q+1-(q+1)q^{-1/2}/\gamma \ge q+1-2q^{1/2} 
    \end{align*}
    is equivalent to  $\frac12+\frac{1}{2q} \le \gamma $, and 
\begin{align*}
    q+1-q^{1/2}(2^\beta+\gamma) \ge q+1-2q^{1/2} 
    \end{align*} 
 is equivalent to $\gamma \le  2-2^\beta.$
Notice that for this choice of $\gamma$ and $\beta$ it follows that $\beta \le \log_2(\frac32)<1$, and we already proved in Theorem \ref{alphabeta} that $W_{\beta,\gamma}\ge \Lambda_q$ on ${B_2(o)}^c$ for all $0\le\beta <1$.
\end{proof}
\begin{proof}[Proof of Theorem \ref{2.11}]
We know from Theorem \ref{optimalHf} that the optimal weight ${W_{1/2,\gamma}}$ is larger than $\Lambda_q$ for $|x| \ge 2$. Then we can define 
\begin{align*}
    \overline{R}(x)= {W_{1/2,\gamma}}(x)-\Lambda_q \qquad \forall x \in \mathbb{T}_{q+1} \setminus B_2(o),\end{align*}
    and \eqref{optimalPoincar} follows. The sharpness of $q^{1/2}$ is  consequence of the optimality of $\widetilde{W}$ for $H$ where $\widetilde{W}$ and $H$ are chosen such as in the proof of Theorem \ref{optimalHf}. 
\end{proof}
\section{Hardy-type inequalities on rapidly growing radial trees}
In view of the results obtained on the homogeneous tree, here we attempt to generalise the family of Hardy inequalities given in Theorem \ref{optimalHf} on a more general context, namely on radial trees.  

Let $T=(V,E)$ be an infinite tree. We call $T$ a {\bf{radial}} tree if the degree $m$ depends only on $|x|$ (see e.g. \cite{Golenia, Woich}). In the following we set $\overline{m}=m-1$ to lighten the notation. For future purposes, we also note that the volume of the ball $B_n(o)$ is given by
\begin{align*}
    \#B_1(o)&=1, \\ 
    \#B_2(o)&=2+\overline{m}(0), \\
    \#B_3(o)&=2+\overline{m}(0)+(\overline{m}(0)+1)\overline{m}(1) ,\\ 
    \vdots \\
       \#B_n(o)&=1+(\overline{m}(0)+1)[1+\overline{m}(1)+\overline{m}(1)\overline{m}(2)+... +\overline{m}(1)\overline{m}(2)\overline{m}(3)\dots\overline{m}(n-2)].
\end{align*}
If particular, if $T=\tq$, then $\overline{m}\equiv q$ and we have that $\#B_n(o) \sim q^{n-1}$ as $n \rightarrow + \infty$.\par
Next, recalling that the proof of Theorem \ref{alphabeta}  relies on the exploitation of the superharmonic functions $u_{\alpha,\beta}$ and that $u_{\alpha,\beta}(x) =  |x|^\beta q^{\alpha |x|}$ for all $|x| \ge 1$, by analogy, we consider on $T$ the family of positive and radial functions: 
\begin{align}\label{supersol2}
    u_{\alpha, \beta}(x):=|x|^\beta \Psi^\alpha(|x|)\qquad {\rm{if}}\,\,|x|\geq 1\,.
\end{align}
Regarding the choice of the function $\Psi$, since in $\tq$ the function $q^{|x|}$ is related to $\#B_{|x|+1}(o)$ and since $\frac{\#B_{|x|+1}(o)}{\#B_{|x|}(o)} \sim q=\overline{m}$ as $|x| \rightarrow + \infty$, we assume that it satisfies the following condition
\begin{align}\label{condition}
    \Psi(|x|+1)=\overline{m}(|x|)\Psi(|x|) \quad \text{for all } |x| \ge 1.
\end{align}
Clearly, if $T=\tq$, then \eqref{condition} holds by taking $\Psi(|x|)=q^{|x|}$. We note that, conversely, for a given positive $\Psi$, condition \eqref{condition} characterizes the tree we are dealing with through its degree, see Remark \ref{generaltreee} below.

By showing that the function $u_{-1/2, \beta}$ is superharmonic on $T$, we obtain the following result. 

\begin{proposition}\label{radial tree}
Let $\Psi: (0,+\infty) \rightarrow \R$ be a positive function such that the map $(0,+\infty)\ni s \mapsto \frac{\Psi(s+1)}{\Psi(s)}$ is nondecreasing and let $T$ be a radial tree with degree $\overline{m}+1$ satisfying condition \eqref{condition}. Then, for all $\beta<1$ and  $\frac{1}{\Psi^{1/2}(1)} \le\gamma \le \frac{1}{\Psi^{1/2}(1)}\bigg{(}\overline{m}(1)+1-\overline{m}^{1/2}(1)2^{\beta}\bigg{)}$ the following inequality holds
\begin{align*}
     \frac 12 \sum\limits_{\substack{x,y \in {T} \\ x \sim y}}\bigg{(}\varphi(x)-\varphi(y)\bigg{)}^2\ge \sum_{x \in T} W_{\beta,\gamma} \varphi^2(x) \qquad \forall \varphi \in C_0(T)\,,
\end{align*}
where $W_{\beta,\gamma}$ is the positive weight 
\begin{align*}
    W_{\beta,\gamma}(x):=\begin{cases} \overline{m}(0)+1-\,\frac{\overline{m}(0)+1}{\gamma\, \Psi^{1/2}(1)} &\text{if $|x|=0$,}\\ 
    \overline{m}(1)+1-\overline{m}^{1/2}(1)2^\beta- \Psi^{1/2}(1) \gamma &\text{if $|x|=1$,}\\ 
    \overline{m}(|x|)+1-\overline{m}^{1/2}(|x|)\Big{(}1+\frac{1}{|x|}\Big{)}^\beta-\overline{m}^{1/2}(|x|-1)\Big{(}1-\frac{1}{|x|}\Big{)}^\beta &\text{if $|x|\ge 2$.}
    \end{cases}
\end{align*}
\end{proposition}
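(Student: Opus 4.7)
The plan is to apply the supersolution technique of \cite[Proposition 3.1]{Golenia}, mimicking the proof of Theorem \ref{alphabeta}. Concretely, I would verify that the radial function
\begin{equation*}
u(x) := \begin{cases} \gamma & \text{if } |x|=0,\\ |x|^\beta\, \Psi^{-1/2}(|x|) & \text{if } |x|\ge 1, \end{cases}
\end{equation*}
coming from \eqref{supersol2} with $\alpha=-1/2$, is positive and superharmonic on $T$, and that $\Delta u(x)/u(x) = W_{\beta,\gamma}(x)$ at every vertex $x$. Once this identity and the nonnegativity of $W_{\beta,\gamma}$ are in hand, the Hardy inequality follows immediately.

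The first step is a direct but bookkeeping-heavy computation of $\Delta u/u$ in the three cases $|x|=0$, $|x|=1$, $|x|\ge 2$, using condition \eqref{condition} to rewrite the ratios $\Psi^{-1/2}(|x|\pm 1)/\Psi^{-1/2}(|x|)$ as powers of $\overline{m}$. At the root this yields $(\overline{m}(0)+1)\bigl(1-(\gamma\Psi^{1/2}(1))^{-1}\bigr)$; at $|x|=1$ the identity $\Psi(2)=\overline{m}(1)\Psi(1)$ produces the second line of $W_{\beta,\gamma}$; for $|x|\ge 2$, applying \eqref{condition} at both $|x|$ and $|x|-1$ produces the asymmetric coefficients $\overline{m}^{1/2}(|x|)$ (from the $\overline{m}(|x|)$ outward neighbors) and $\overline{m}^{1/2}(|x|-1)$ (from the unique inward neighbor) appearing in the third line.

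The main obstacle is checking $W_{\beta,\gamma}(x)\ge 0$, which is what guarantees superharmonicity. The cases $|x|=0$ and $|x|=1$ reduce respectively to the lower and upper bounds on $\gamma$ stated in the hypothesis, so they come for free. For $|x|\ge 2$ the key input is the monotonicity assumption on $s\mapsto \Psi(s+1)/\Psi(s)$: via \eqref{condition} it says exactly that $\overline{m}(|x|-1)\le \overline{m}(|x|)$, which lets me bound
\begin{equation*}
\overline{m}^{1/2}(|x|)\Big(1+\tfrac{1}{|x|}\Big)^\beta + \overline{m}^{1/2}(|x|-1)\Big(1-\tfrac{1}{|x|}\Big)^\beta \le \overline{m}^{1/2}(|x|)\Big[\big(1+\tfrac{1}{|x|}\big)^\beta+\big(1-\tfrac{1}{|x|}\big)^\beta\Big].
\end{equation*}
The hypothesis $\beta<1$ makes $t\mapsto t^\beta$ concave, hence $(1+1/|x|)^\beta+(1-1/|x|)^\beta\le 2$. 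Combining this with the elementary AM--GM bound $2\overline{m}^{1/2}(|x|)\le \overline{m}(|x|)+1$ delivers $W_{\beta,\gamma}(x)\ge 0$ and closes the argument.
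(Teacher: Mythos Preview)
Your proposal is correct and follows essentially the same route as the paper: both build the same supersolution $u$ from \eqref{supersol2} with $\alpha=-1/2$, compute $\Delta u/u$ case by case via \eqref{condition}, and invoke \cite[Proposition 3.1]{Golenia}. The only cosmetic difference is in the $|x|\ge 2$ nonnegativity check---the paper rewrites $W_{\beta,\gamma}(x)$ as the sum $(\overline{m}^{1/2}(|x|)-1)^2 + \overline{m}^{1/2}(|x|)\bigl(2-(1+\tfrac{1}{|x|})^\beta-(1-\tfrac{1}{|x|})^\beta\bigr) + \bigl(\overline{m}^{1/2}(|x|)-\overline{m}^{1/2}(|x|-1)\bigr)(1-\tfrac{1}{|x|})^\beta$ of three nonnegative terms, whereas you bound the subtracted quantity directly---but the ingredients (monotonicity of $\overline{m}$, concavity of $t\mapsto t^\beta$, and AM--GM) are identical.
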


\begin{remark}\label{generaltreee}
It is readily seen that, by taking $\Psi(s)=q^{s}$ in Proposition \ref{radial tree}, we get $T=\tq$ and we re-obtain Theorem \ref{optimalHf}; however, Proposition \ref{radial tree} gives no information about the criticality of the operator $\Delta-W_{\beta,\gamma}$ on $T$. We also note that condition \eqref{condition} yields rapidly growing trees, such as those generated, for instance, by the maps $\Psi_a(s)=e^{s^a}$ with $a>1$.
\end{remark}


\begin{proof}
The proof follows the same lines of the proof of Theorem \ref{alphabeta}, namely we show that the function $u_{\alpha, \beta}$ in \eqref{supersol2}, with $\alpha=-1/2$ and  $\beta<1$, is superharmonic in $T \setminus B_2(0)$ and that it can be properly extended to $o$ in order to get a superharmonic function on the whole $T$. Hence the statement follows by invoking \cite[Proposition 3.1]{Golenia}. 

If $\beta<1$ and $|x| \ge 2$ we have
\begin{align*}
 \Delta u_{-1/2,\beta}(x)&=\Big(\overline{m}(|x|)+1\Big{)}|x|^\beta \Psi^{-1/2}(|x|)-\overline{m}^{1/2}(|x|)(|x|+1)^\beta \Psi^{-1/2}(|x|)+ \\ &-(|x|-1)^\beta \overline{m}^{1/2}(|x|-1)\Psi^{-1/2}(|x|) \\ 
    &=u_{-1/2,\beta}(x)\bigg{(}\overline{m}(|x|)+1-\overline{m}^{1/2}(|x|)\Big{(}1+\frac{1}{|x|}\Big{)}^\beta-\overline{m}^{1/2}(|x|-1)\Big{(}1-\frac{1}{|x|}\Big{)}^\beta\bigg{)}.\end{align*}
Since by hypothesis the function $\overline{m}$ is nondecreasing, we get
    \begin{align*}
         \Delta u_{-1/2, \beta}(x)&=u_{-1/2,\beta}(x)\bigg{(}\big{(}\overline{m}^{1/2}(|x|))-1\big{)}^2+\overline{m}^{1/2}(|x|)\Big{(}2-\Big{(}1+\frac{1}{|x|}\Big{)}^\beta-\Big{(}1-\frac{1}{|x|}\Big{)}^\beta\Big{)} \\ &+\Big{(}\overline{m}^{1/2}(|x|)-\overline{m}^{1/2}(|x|-1)\Big{)}\Big{(}1-\frac{1}{|x|}\Big{)}^\beta\bigg{)}> 0,
    \end{align*} 
for all $|x|\geq 2$.

 Then we choose $\gamma:=u_{-1/2,\beta}(o)$ such that $\Delta u_{-1/2,\beta}$ is nonnegative in $B_2(o)$. By a direct computation we have
    \begin{align*}
        \Delta u_{-1/2,\beta}(o) = (\overline{m}(0)+1)(\gamma -\Psi^{-1/2}(1)) \ge 0,
    \end{align*}
for $\gamma \ge \Psi^{-1/2}(1)$. Furthermore, for $|x|=1$ we get 
    \begin{align*}
        \Delta u_{-1/2,\beta}(x)&=(\overline{m}(1)+1)\Psi^{-1/2}(1)  -\overline{m}(1)2^{\beta}\Psi^{-1/2}(2)-\gamma \ge 0, 
    \end{align*}
  for $\gamma \le \Psi^{-1/2}(1)\bigg{(}\overline{m}(1)+1-\overline{m}^{1/2}(1)2^{\beta}\bigg{)}$. This concludes the proof. 
\end{proof}

\par\bigskip\noindent
\textbf{Acknowledgments.} The first author is partially supported by the INdAM-GNAMPA 2019 grant ``Analisi spettrale per operatori ellittici con condizioni di Steklov o parzialmente incernierate'' and by the PRIN project ``Direct and inverse problems for partial differential equations: theoretical aspects and applications'' (Italy). The first and third authors are members of the Gruppo Nazionale per l'Analisi Matematica, la Probabilit\`a e le loro Applicazioni (GNAMPA) of the Istituto Nazionale di Alta Matematica (INdAM).

\bigskip

\end{document}